\renewcommand\section{\@startsection {section}{1}{\z@}%
{-3.5ex \@plus -1ex \@minus -.2ex}%
{2.3ex \@plus.2ex}%
{\normalfont\large\bfseries\centering}}
\DeclareSymbolFont{cyrletters}{OT2}{wncyr}{m}{n}
\DeclareMathSymbol{\Sha}{\mathalpha}{cyrletters}{"58}
\let\Re\undefined
\DeclareMathOperator{\Re}{Re}
\DeclareMathOperator{\Tr}{Tr}
\DeclareMathOperator{\supp}{supp}
\DeclareMathOperator{\R}{\mathbb R}
\DeclareMathOperator{\Q}{\mathbb Q}
\DeclareMathOperator{\A}{\mathbb A}
	\newcommand{\Res}{\operatorname{Res}}
	\newcommand{\Nr}{\operatorname{Nr}_{F/\Q}}
	\newcommand{\K}{\operatorname{K}}
	\newcommand{\Ad}{\operatorname{Ad}}
	\newcommand{\Reg}{\operatorname{Reg}}
	\newcommand{\fin}{\operatorname{fin}}
	\newcommand{\Vol}{\operatorname{Vol}}
	\newcommand{\sm}{\operatorname{Small}}
	\newcommand{\Ind}{\operatorname{Ind}}
	\newcommand{\eps}{\varepsilon}
	\newcommand{\RNum}[1]{\uppercase\expandafter{\romannumeral #1\relax}}
\begin{document}
\theoremstyle{plain}
	\newtheorem{thm}{Theorem}[section]
	
	\newtheorem{cor}[thm]{Corollary}
	\newtheorem{thmy}{Theorem}
	\renewcommand{\thethmy}{\Alph{thmy}}
	\newenvironment{thmx}{\stepcounter{thm}\begin{thmy}}{\end{thmy}}
	\newtheorem{cory}{Corollary}
	\renewcommand{\thecory}{\Alph{cory}}
	\newenvironment{corx}{\stepcounter{thm}\begin{cory}}{\end{cory}}
	\newtheorem{hy}[thm]{Hypothesis}
	\newtheorem*{thma}{Theorem A}
	\newtheorem*{corb}{Corollary B}
	\newtheorem*{thmc}{Theorem C}
	\newtheorem{lemma}[thm]{Lemma}
	\newtheorem{prop}[thm]{Proposition}
	\newtheorem{conj}[thm]{Conjecture}
	\newtheorem{fact}[thm]{Fact}
	\newtheorem{claim}[thm]{Claim}
	
	\theoremstyle{definition}
	\newtheorem{defn}[thm]{Definition}
	\newtheorem{example}[thm]{Example}
	\theoremstyle{remark}
	
	\newtheorem{remark}[thm]{Remark}	
	\numberwithin{equation}{section}
	\setcounter{tocdepth}{1}
	\setcounter{secnumdepth}{0}
	
\title[]{Relative Trace Formula and Simultaneous Nonvanishing for $\mathrm{GL}_3\times \mathrm{GL}_1$ and $\mathrm{GL}_3\times \mathrm{GL}_2$ $L$-functions}%
\author{Philippe Michel, Dinakar Ramakrishnan and Liyang Yang}

\address{EPFL-SB-MATH-TAN Station 8\\
	1015 Lausanne, Switzerland}
\email{philippe.michel@epfl.ch}

\address{253-37 Caltech, Pasadena\\
	CA 91125, USA}
\email{dinakar@caltech.edu}
\address{Department of Mathemtics, Princeton University\\
	Princeton, NJ 08544, USA}
\email{lyyang@princeton.edu}

\begin{abstract}
Fix a Dirichlet character $\chi$ and a cuspidal GL$(2)$ eigenform $\phi$ with relatively prime conductors. Then we show that there are infinitely many cusp forms $\pi$ on GL$(3)/\Q$ such that $L(1/2, \pi \times \chi)$ and $L(1/2, \pi \times \phi)$ are simulaneously non-zero. We achieve this by use of Jacquet's Relative Trace Formula. We derive an expression of the average over the GL$(3)$ cuspidal spectrum as a sum of a non-zero main term and two subsidiary terms which are forced to be zero for large enough level by use of a suitable test function. This article is dedicated to the memory of Harish Chandra, on the occasion of his hundredth birthday.

\end{abstract}
	
\date{\today}%

\maketitle
\begin{flushright}
  \textit{Dedicated to Harish Chandra, with admiration, on the centenary of his birth}
\end{flushright}
\hypersetup{
    linkcolor=black, 
    colorlinks=true   
}

\tableofcontents
\setcounter{secnumdepth}{1}
\section{Introduction}

Harish Chandra was a mathematical giant who revolutionized the theory of representations of semisimple Lie groups $G$, which many of us utilize in several different facets of modern mathematics, from number theory at one end to geometry at the other. The basic objects of the theory are the discrete series whose criterion for existence (that $G$ possesses a compact Cartan subgroup) and whose mirific properties were established by him. They lead directly to tempered representations via parabolic induction of discrete series, and then on to the general Langlands classification and admissible $({\rm Lie}(G), K)$-modules. Of particular interest to us three (writing this article) is the unitary representation $L^2(\Gamma\backslash G)$ for a discrete subgroup $\Gamma$ of congruence type, where $G$ acts by right translation $\rho$. The uninitiated could first look at the case when $\Gamma$ is cocompact, in which case $L^2(\Gamma\backslash G)$ decomposes discretely into a Hilbert direct sum of ireducible unitary representations of $G$, each occurring with finite multiplicity. The simplest case is when $G$ is SL$(2,\R)$ and $\Gamma$ the group of integral units in an indefinite quaternion division algebra $B$ over $\Q$.

We are especially interested in the case when $G$ is $\mathrm{PGL}_3(\R)$, which already carries some of the difficulties encountered beyond $\mathrm{SL}_2(\R)$. We are in particular concerned with the periods over cycles defined by (discrete quotients of) suitable Lie subgroups $H_1$ and $H_2$, which are intimately related to the central values of Rankin-Selberg $L$-functions of GL$(3) \times {\rm GL}(1)$ and GL$(3) \times {\rm GL}(2)$; this relationship follows from the integral representations of Jacquet, Piatetski-Shapiro and Shalika in these two (very different) settings \cite{JPSS1,JPSS2}. We utilize for their study the relative trace formula of $G$ (acting on $L^2(\Gamma\backslash G)$) with respect to $(H_1, H_2)$. The spectral side furnishes the periods, and the essential difficulty of the paper is in analyzing the geometric side, isolating the main term and showing the remaining terms to be negligible.

We will be interested in congruence subgroups $\Gamma$, which are not cocompact. But we are able to move to the adelic picture, and work with automorphic representations $\pi$ of GL$_3(\A)$, with $\A$ being the adele ring of $\Q$, and by a suitable choice of a test function $f= \prod_v f_v$, restrict to those $\pi$ which have a supercuspidal component at a fixed prime $v_0$, which simplifies the trace formula and allows one to focus just on the discrete part, which is a big help. We will actually consider the periods weighted by a fixed character $\chi$ of $H_1$ and a fixed cusp form $\sigma$ on $H_2 = {\rm PGL}(2)$. The final objective is to show that for infinitely many cusp forms $\pi$ of GL$(3)$, $L(1/2, \pi \otimes \chi)$ and $L(1/2, \pi \times \sigma)$ are simultaneously non-zero.

The second author of this article (D.R.) gave a lecture at the Harish Chandra Centennial conference at HRI in Allahabad in October, 2023, when he focused on a similar, but much more difficult case of $U(2,1) \times U(1,1)$ (see \cite{MRY23}). The case considered here is much shorter and is more amenable to an easier exposition. We hope it will be of some use.

Let us begin with a quick review of Jacquet's relative trace formula, which the experts can skip. For simplicity let us pretend that $\Gamma\backslash G$ is compact. Every smooth compactly supported test function $f$ on $G$ acts as a nice compact operator on $L^2(\Gamma\backslash G)$ by $\rho(f)\varphi: = \varphi \mapsto \int_{\Gamma\backslash G} f(g)\rho(g)\varphi dg$, where $dg$ is the quotient measure induced by the Haar measure on $G$. Since $\varphi$ is left invariant under $\Gamma$, $\rho(f)\varphi$ is the pairing $\langle \varphi, {K_f}_{|_{\Delta}}\rangle_{\Gamma\backslash G}$, where $K_f$ is the kernel function on $\Gamma\backslash G \times \Gamma\backslash G$ given by $(x,y) \mapsto \sum_{\gamma \in \Gamma} \, f(x^{-1}\gamma y)$, and $\Delta$ denotes the diagonally embedded $\Gamma\backslash G$. This leads to the expression of the trace of $f$ as the integral of $K_f$ over $\Delta$. On the one hand, a {\it spectral analysis} of $L^2(\Gamma\backslash G)$ as $\sum_{\pi \in \hat G} \, m_\pi V_\pi$ into a sum of irreducible unitary representations $(\pi, V_\pi)$ with multiplicity $m_\pi$. On the other hand, the geometric side expands $\int_\Delta K_f$ as $\sum_{\{\gamma\}} {\rm vol}(\Gamma_\gamma\backslash G_\gamma) O_\gamma(f)$, where $\{\gamma\}$ is the set of conjugacy classes of $\gamma$ (with representative $\gamma$), $G_\gamma$ (resp. $\Gamma_\gamma$) is the centralizer of $\gamma$ in $G$ (resp. $\Gamma$), ${\rm vol}(\Gamma_\gamma\backslash G_\gamma)$ is the volume of the quotient space of $G_\gamma$ under the left action of $\Gamma_\gamma$, and $O_\gamma(f)$ denotes the {\it orbital integral} $\int_{G_\gamma\backslash G} f(x^{-1}\gamma y) dx$. Comparing the two sides one often gets many beautiful consequences. When $f$ is a Hecke operator, the geometric picture to keep in mind is the intersection of the graph of the Hecke correspondence with the diagonal. Jacquet's ingenious idea was to replace the diagonal by a product $Z_1 \times Z_2$ of suitable codimension  in $(\Gamma\backslash G)^2$ (so it is not too big or too small), and consider the integral - ``{\it period}'' - of the kernel $K_f$ over $Z_1 \times Z_2$. To be able to apply representation theory, one is led to restrict the ``{\it cycles}'' $Z_j, j=1, 2$ to be arithmetic quotients $\Gamma'_j\backslash H_j$, with $H_j$ Lie subgroups of $G$ and $\Gamma'_j = \Gamma \cap H_j$; note that the $H_j$ are not necessarily semisimple. A simple example is $G=\mathrm{PGL}(2,\R)$, $\Gamma$ cocompact defined by an indefinite quaternion division algebra $B$ over $\Q$, $\Gamma\backslash G$ a circle bundle over the Riemann surface $\Gamma\backslash \mathcal{H}$ (where $\mathcal{H}$ is the upper half plane) and $H_j$ is (for $j=1,2$) the torus defined by the algebraic integers of norm one in a real quadratic field $F_j$ where $B$ splits; here the image of $Z_1\times Z_2$ in $(\Gamma\backslash \mathcal{H})^2$ is a product of two circles (and has the same dimension as the diagonal Riemann surface).

Often one also integrates against a weight function $(\xi_1, \xi_2)$ on $Z_1 \times Z_2$. The kernel function $K_f$ can be expanded spectrally as $\sum_r \frac{\rho(f)\varphi_r \otimes \overline \varphi_r}{\langle \varphi_r, \varphi_r\rangle}$, where the sum is over an orthogonal basis $\{\varphi_r | r \in {\mathbb N}\}$ of the automorphic spectrum. This leads to the spectral side of the relative trace formula for $\int\int_{Z_1 \times Z_2} \, K_f(x,y) \xi_1(x)\xi_2(y)dxdy$ to be $\sum_r {\mathcal P_1(\rho(f)\varphi_r, \xi_1)} {\mathcal P_2(\overline \varphi_r, \xi_2)}$, where $\mathcal P_1(\rho(f)\varphi_r, \xi_1)$, resp. $\mathcal P_2(\overline\varphi_r, \xi_2)$, equals the {\it weighted period} $\int_{Z_1} \rho(f)(\varphi_r)(x) \xi_1(x) dx$, resp. $\int_{Z_2} {\overline\varphi_r}(y) \xi_2(y) dy$. In some cases, $\rho(f)$ acts as a scalar $\lambda_r$ on $\varphi_r$, simplifying the first period. A miracle is that in certain cases, the period integrals are related to $L$-values, as it is in the case we analyze in this article. (For a discussion of the simpler case of PGL$(2)/\Q$ with $H_1=H_2=T$, the diagonal torus, see \cite{RR05}.) Writing $H=H_1 \times H_2$ and $\Gamma'= \Gamma'_1 \times \Gamma'_2$, we may expand the geometric side as $\sum_{\gamma \in \Gamma /\equiv} {\rm vol}(\Gamma'_\gamma\backslash H_\gamma) O_\gamma(f, \xi)$, where $\Gamma/\equiv$ is the set of equivalence classes of $\gamma \in \Gamma$ with the equivalence $\gamma \equiv \delta$ (in $\Gamma$) iff $\delta = h_1\gamma h_2$ for some $h_1, h_2$ , $H_\gamma$ (resp. $\Gamma'_\gamma$) is the stabilizer of $\gamma$ in $H$ (resp. $\Gamma'$), and $O_\gamma(f, \xi)$ denotes the integral $\int_{H_\gamma\backslash H} f(x^{-1}\gamma y)\xi_1(x)\overline\xi_2(y)dx dy$.

In our case of interest, where $G=$PGL$(3)$, we take $H_2$ to be PGL$(2)$, and the period is given (thanks to \cite{JPSS1}) essentially by $L(1/2, \varphi_r \times \sigma)$ if the weight function on $H_2$ is given by a cusp form in the space of $\sigma$. Even more interesting is the fact that if we take $H_1$ to be $\big\{\begin{pmatrix}
a& 0 & b\\
& 1& c\\
&& 1
\end{pmatrix} \big\}$, whose abelianization is $\mathrm{GL}(1)$, then the corresponding period is (essentially) $L(1/2, \varphi_r \times \chi)$, for $\xi_1$ a character $\chi$ (\cite{JPSS2}). And our object is to prove that for many $\varphi_r$ belonging to different cuspidal automorphic forms on $G$, these two periods are simultaneously non-zero. We can achieve this because there are, for our choice of a test function $f$, three terms on the geometric side, and two of those happen to be zero (after some work) by suitable shrinking the support of $f$. The remaining (main) term is shown to be non-zero, as the (prime) level of the GL$(3)$ forms becomes large.
We have not tried here to quantify here the amount of non-vanishing, as have done in \cite{MRY23}.

\medskip

We conclude the Introduction with a few comments. In our main result below, it should be possible with some work to deal with the more general case where we do {\it not} require (by our choice of $f$) any supercuspidal component of $\pi$, but it will  be more delicate. Much harder will be the case of $\pi'$ (on GL$(2)$) {\it not} cuspidal, which will require a subtle regularization and consequently some additional terms, and it will be impressive to solve this case. In a different direction, one could try to get the second moment of GL$(3) \times {\rm GL}(1)$ $L$-functions, where we will take $H_2$ to be the same as the $H_1$ used here; the geometric side is quite difficult in this case. By contrast, the case of the second moment of the GL$(3) \times {\rm GL}(2)$ $L$-functions work out a bit better, though still difficult; in fact the third author of this paper has achieved that for GL$(n+1) \times {\rm GL}(n)$ $L$-functions (\cite{Y23}) for any $n \geq 2$.

Ph.M. was partially supported by the SNF grant 200021\_197045. D.R.  was supported by a grant from the Simons Foundation (award Number: 523557).

\medskip

\section{The Main Result}

\begin{thmx}\label{M}
Let $\chi$ be a unitary Hecke character of $F^{\times}\backslash\mathbb{A}_F^{\times}$. Let $\sigma$ be a unitary cuspidal automorphic representation of $\mathrm{GL}_2/F$. Then there are infinitely many unitary cuspidal automorphic representations $\pi$ of $\mathrm{PGL}_3/F$ such that
\begin{align}\label{non-vanish}
L(1/2,\pi\times\chi)L(1/2,\pi\times\sigma)\neq 0.
\end{align}
\end{thmx}
\begin{remark}
	More precisely, we will prove that \eqref{non-vanish} holds for $\pi$'s such that
	\begin{itemize}
		\item $\pi_\infty$ is contained in a compact subset of the unitary dual of $\mathrm{GL}_3(F_\infty)$,
		\item there is a fixed finite split  place $v_0$, at which $\pi_{v_0}$ is isomorphic to (fixed) supercuspidal representation of $\mathrm{PGL}_3(F_{v_0})$,
		\item $\pi$ has an Iwahori invariant vector at some prime ideal $\mathfrak{N}$ with $\Nr(\mathfrak{N})$ sufficiently large (in particular distinct from $v_0$),
		\item $\pi$ is unramified at every other finite place.
	\end{itemize}
	For $\Nr(\mathfrak{N})$ large enough the number of such $\pi$ is $\asymp \Nr(\mathfrak{N})^3$. One may then asked for how such $\pi$ satisfy \eqref{non-vanish}. In \cite{MRY23} we could establish that this was the case for $\gg \Nr(\mathfrak{N})^\delta$ for some fixed $\delta>0$. The proof used the {\em amplification method} to bound individually  the relevant  period integrals; by design these were {\em non-negative}. In the present case, we lack this positivity property. Instead, using the {\em mollification method} one could show that \eqref{non-vanish} holds for $\gg\log(\Nr(\mathfrak{N}))^\delta$ for some fixed $\delta>0$. We leave this to the interested reader.
	  \end{remark}

\medskip

\subsection{Notation Guide}
\subsubsection{Number Fields and Measures}\label{1.1.1}
Let $F$ be a number field with ring of integers $\mathcal{O}_F.$ Let $N_F$ be the absolute norm. Let $\mathfrak{O}_F$ be the different of $F.$ Let $\mathbb{A}_F$ be the adele group of $F.$ Let $\Sigma_F$ be the set of places of $F.$ Denote by $\Sigma_{F,\fin}$ (resp. $\Sigma_{F,\infty}$) the set of non-Archimedean (resp. Archimedean) places. For $v\in \Sigma_F,$ we denote by $F_v$ the corresponding local field. For a non-Archimedean place $v,$ let $\mathcal{O}_v$ be the ring of integers of $F_v$, and $\mathfrak{p}_v$ be the maximal prime ideal in $\mathcal{O}_v$. Given an integral ideal $\mathcal{I},$ we say $v\mid \mathcal{I}$ if $\mathcal{I}\subseteq \mathfrak{p}_v.$ Fix a uniformizer $\varpi_{v}\in\mathfrak{p}_v.$ Denote by $e_v(\cdot)$ the evaluation relative to $\varpi_v$ normalized as $e_v(\varpi_v)=1.$ Let $q_v$ be the cardinality of $\mathbb{F}_v:=\mathcal{O}_v/\mathfrak{p}_v.$ We use $v\mid\infty$ to indicate an Archimedean place $v$ and write $v<\infty$ if $v$ is non-Archimedean. Let $|\cdot|_v$ be the norm in $F_v.$ Put $|\cdot|_{\infty}=\prod_{v\mid\infty}|\cdot|_v$ and $|\cdot|_{\fin}=\prod_{v<\infty}|\cdot|_v.$ Let $|\cdot|_{\mathbb{A}_F}=|\cdot|_{\infty}\otimes|\cdot|_{\fin}$. We will simply write $|\cdot|$ for $|\cdot|_{\mathbb{A}_F}$ in calculation over $\mathbb{A}_F^{\times}$ or its quotient by $F^{\times}$.

Let $\psi_{\mathbb{Q}}$ be the additive character on $\mathbb{Q}\backslash \mathbb{A}_{\mathbb{Q}}$ such that $\psi_{\mathbb{Q}}(t_{\infty})=\exp(2\pi it_{\infty}),$ for $t_{\infty}\in \mathbb{R}\hookrightarrow\mathbb{A}_{\mathbb{Q}}.$ Let $\psi_F=\psi_{\mathbb{Q}}\circ \Tr_F,$ where $\Tr_F$ is the trace map. Then $\psi_F(t)=\prod_{v\in\Sigma_F}\psi_v(t_v)$ for $t=(t_v)_v\in\mathbb{A}_F.$ For $v\in \Sigma_F,$ let $dt_v$ be the additive Haar measure on $F_v,$ self-dual relative to $\psi_v.$ Then $dt=\prod_{v\in\Sigma_F}dt_v$ is the standard Tamagawa measure on $\mathbb{A}_F$. Let $d^{\times}t_v=\zeta_{F_v}(1)dt_v/|t_v|_v,$ where $\zeta_{F_v}(\cdot)$ is the local Dedekind zeta factor. In particular, $\Vol(\mathcal{O}_v^{\times},d^{\times}t_v)=\Vol(\mathcal{O}_v,dt_v)=N_{F_v}(\mathfrak{D}_{F})^{-1/2}$ for all finite place $v.$ Moreover, $\Vol(F\backslash\mathbb{A}_F; dt_v)=1$ and $\Vol(F\backslash\mathbb{A}_F^{(1)},d^{\times}t)=\underset{s=1}{\Res}\ \zeta_F(s),$ where $\mathbb{A}_F^{(1)}$ is the subgroup of ideles $\mathbb{A}_F^{\times}$ with norm $1,$ and $\zeta_F(s)=\prod_{v<\infty}\zeta_{F_v}(s)$ is the finite Dedekind zeta function. Denote by $\widehat{F^{\times}\backslash\mathbb{A}_F^{(1)}}$  the Pontryagin dual of $F^{\times}\backslash\mathbb{A}_F^{(1)}.$

\subsubsection{Reductive Groups}
For an algebraic group $H$ over $F$, we will denote by $[H]:=H(F)\backslash H(\mathbb{A}_F).$ We equip measures on $H(\mathbb{A}_F)$ as follows: for each unipotent group $U$ of $H,$ we equip $U(\mathbb{A}_F)$ with the Haar measure such that, $U(F)$ being equipped with the counting measure. The measure of $[U]$ is $1.$ We equip the maximal compact subgroup $K$ of $H(\mathbb{A}_F)$ with the Haar measure such that $K$ has total mass $1.$ When $H$ is split, we also equip the maximal split torus of $H$ with Tamagawa measure induced from that of $\mathbb{A}_F^{\times}.$

In this paper we set $G=\mathrm{GL}(3)$ and $G'=\mathrm{GL}_2$. Let $B$ be the group of upper triangular matrices in $G$.
Let $\overline{G}=Z\backslash G$ and $B_0=Z\backslash B,$ where $Z$ is the center of $G.$ Let $N$ be the unipotent radical of $B$. Let $\theta$ be the generic character induced by $\psi_F$:
\begin{equation}\label{eq1.12}
\theta\left(u\right)=\psi_F(u_{12}+u_{23}),\  \ u=(u_{ij})\in N(\mathbb{A}_F).
\end{equation}
Then $\theta=\otimes_v\theta_v$, where $\theta_v\left(u_v\right):=\psi_v(u_{12,v}+u_{23,v})$, $v\in \Sigma_F$.

Let $K'=\otimes_vK_v'$ be a maximal compact subgroup of $G'(\mathbb{A}_F),$ where $K_v'=\mathrm{U}_2(\mathbb{C})$ if $v$ is complex, $K_v'=\mathrm{O}_2(\mathbb{R})$ if $v$ is real, and $K_v'=G'(\mathcal{O}_v)$ if $v<\infty.$ For $v\in \Sigma_{F,\fin},$ $m\in\mathbb{Z}_{\geq 0},$ define
\begin{equation}\label{2.1}
K_v[m]:=\Big\{(k_{ij})_{1\leq i,j\leq 3}\in G(\mathcal{O}_v):\ k_{31}, k_{32}\in \mathfrak{p}_v^{m}\Big\}.
\end{equation}

Define the following matrices:
\begin{align*}
w_1 = \begin{pmatrix}
& 1 \\
1 & \\
&& 1
\end{pmatrix}, \quad
w_2 = \begin{pmatrix}
1 & \\
&& 1 \\
& 1 &
\end{pmatrix}.
\end{align*}

\subsubsection{Other Conventions}\label{sec1.5.4}
Denote by $\alpha\asymp \beta$ for $\alpha, \beta \in\mathbb{R}$ if there are absolute constants $c$ and $C$ such that $c\beta\leq \alpha\leq C\beta.$

Throughout the paper, we fix a number field $F$, functions  $h_v\in C_c^{\infty}(\mathrm{GL}_3(F_v))$ at $v\mid\infty$ supported in a small neighborhood of the identity $I_3$ with $h_v(I_3)=1$, a supercuspidal representation $\pi_{v_0}$, a unitary Hecke character $\chi$ of $F$, and a unitary cuspidal automorphic representation $\sigma$ of $\mathrm{GL}_2/F$ with a cusp form $\phi\in \sigma$. All implied constants in $\ll$ or $O(\cdot)$ will depend at most on $F$, $h_v$'s, $\pi_{v_0}$, $\chi$ and $\phi$.


\section{Test Functions and the Two Sides of RTF}
\subsection{Intrinsic Data}
\subsubsection{Automorphic Representations}
Let $\chi=\otimes_v\chi_v$ be a unitary Hecke character of $F^{\times}\backslash\mathbb{A}_F^{\times}$. Let $\sigma=\otimes_v\sigma_v$ be a unitary automorphic representation of $\mathrm{GL}_2/F$.

For $v<\infty$, we denote by  $r_{\chi_v}$ and $r_{\sigma_v}$ the exponent of the arithmetic conductor of $\chi_v$ and $\sigma_v$, respectively. Let
\begin{itemize}
\item $\Sigma_{\chi_{\fin}}:=\big\{v<\infty:\ r_{\chi_v}>0\big\}$, the set of ramified places of $\chi_{\fin}=\otimes_{v<\infty}\chi_v$.
\item $\Sigma_{\sigma_{\fin}}:=\big\{v<\infty:\ r_{\sigma_v}>0\big\}$, the set of ramified places of $\sigma_{\fin}=\otimes_{v<\infty}\sigma_v$.
\end{itemize}
Then $\Sigma_{\chi_{\fin}}$ and $\Sigma_{\sigma_{\fin}}$ are finite sets. 

\subsubsection{Cusp Forms in $\sigma$}\label{sec2.1.2}
For $v\mid\infty$, let $\alpha_v\in C_c^{\infty}(F_v^{\times})$ be such that $\alpha_v(1)=1$ and $\alpha_v(t_v)=0$ if $|t_v-1|> \varepsilon$. Let $W_v$ be the function in the Whittaker model satisfying
\begin{align*}
W_v\begin{pmatrix}
a_v\\
& 1
\end{pmatrix}=\alpha_v(a_v),\ \ a_v\in F_v^{\times}.
\end{align*}

For $v\leq \infty$, we let $W_v$ be the local new vector in the Whittaker model of $\sigma_v$, normalized as $W_v(I_2)=1$.

Let $\phi$ be the cusp form in $\sigma$ corresponding to the Whittaker vector $\otimes_{v\leq \infty}W_v$.

\subsubsection{Ramifications}\label{sec2.1.3}
Let $\mathfrak{N}\subsetneq \mathcal{O}_F$ be a prime ideal. Let
\begin{equation}\label{equ2.1}
I(\mathfrak{N}):=I_{\mathfrak{N}}[1]\prod_{v<\infty,\ v\nmid\mathfrak{N}}K_v,
\end{equation}
where $I_{\mathfrak{N}}[1]$ is the Iwahori subgroup of level $1$ at $v=\mathfrak{N}$.
Suppose the absolute norm $N_F(\mathfrak{N})$ is sufficiently large so that $\chi_v$ and $\pi_v$ are unramified at $v=\mathfrak{N}$.


\subsubsection{Supercuspidal Components}
Let $v_0$ be a fixed finite split place such that $r_{\chi_{v_0}}=r_{\sigma_{v_0}}=0$ and $v_0\nmid\mathfrak{N}$. Let $\pi_{v_0}$ be a unitary supercuspidal representation of $\mathrm{PGL}_3/F_{v_0}$. Let $W_{v_0}$ be the vector in the Whittaker model of $\pi_{v_0}$ (relative to $\overline{\psi}_{v_0}$) such that
\begin{equation}\label{mc}
W_{v_0}\begin{pmatrix}
y_{v_0}\\
& 1
\end{pmatrix}=\mathbf{1}_{K_{v_0}'^{\circ}[1]}(y_{v_0}),\ \ y_{v_0}\in \mathrm{GL}_2(F_{v_0}).
\end{equation}
Here $K_{v_0}'^{\circ}[1]$ is the congruence subgroup defined by
\begin{align*}
K_{v_0}'^{\circ}[1]:=\bigg\{\begin{pmatrix}
k_{11}& k_{12}\\
k_{21} & k_{22}
\end{pmatrix}:\ \ k_{21}\in \mathfrak{p}_{v_0},\ k_{22}-1\in \mathfrak{p}_{v_0}\bigg\}.
\end{align*}

By Kirillov model theory, such a Whittaker vector $W_{v_0}$ is uniquely determined by \eqref{mc}. Moreover, we have the following property of the matrix coefficient.
\begin{lemma}\label{lem2.1}
Let notation be as before. Let $y_{v_0}:=\begin{pmatrix}
1& & b_{v_0}\\
& 1& c_{v_0}\\
&& 1
\end{pmatrix}\begin{pmatrix}
x_{v_0}\\
& 1
\end{pmatrix}$, where $b_{v_0}, c_{v_0}\in F_{v_0}$, and $x_{v_0}\in \mathrm{GL}_2(F_{v_0})$. Then
\begin{equation}\label{eq2.2}
\langle \pi_{v_0}(y_{v_0})W_{v_0},W_{v_0}\rangle=\overline{\psi}_{v_0}(c_{v_0})\Vol(K_{v_0}'^{\circ}[1])\mathbf{1}_{b_{v_0}, c_{v_0}\in \mathfrak{p}_{v_0}^{-1}}\mathbf{1}_{K_{v_0}'^{\circ}[1]}(x_{v_0}).	
\end{equation}
\end{lemma}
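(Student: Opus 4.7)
The plan is to compute the matrix coefficient directly using the standard integral representation of the $\mathrm{GL}_3(F_{v_0})$-invariant inner product on the Whittaker model of a supercuspidal representation:
\[
\langle W_1, W_2 \rangle = \int_{N_2(F_{v_0}) \backslash \mathrm{GL}_2(F_{v_0})} W_1\!\left(\begin{pmatrix} h & 0 \\ 0 & 1 \end{pmatrix}\right) \overline{W_2\!\left(\begin{pmatrix} h & 0 \\ 0 & 1 \end{pmatrix}\right)}\, dh.
\]
Here the measure on the quotient is induced from the Haar measures on $\mathrm{GL}_2(F_{v_0})$ (maximal compact of mass $1$) and on $N_2(F_{v_0})$ (self-dual with respect to $\psi_{v_0}$); the integral converges absolutely because $\pi_{v_0}$ is supercuspidal.

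First I would compute the left translate $W_{v_0}\!\left(\bigl(\begin{smallmatrix} h & 0 \\ 0 & 1 \end{smallmatrix}\bigr) y_{v_0}\right)$. The identity
\[
\begin{pmatrix} h & 0 \\ 0 & 1 \end{pmatrix} y_{v_0} = u \cdot \begin{pmatrix} h x_{v_0} & 0 \\ 0 & 1 \end{pmatrix},\qquad u := \begin{pmatrix} 1 & 0 & h_{11} b_{v_0} + h_{12} c_{v_0} \\ 0 & 1 & h_{21} b_{v_0} + h_{22} c_{v_0} \\ 0 & 0 & 1 \end{pmatrix} \in N(F_{v_0}),
\]
together with Whittaker equivariance $\theta(u) = \overline\psi_{v_0}(h_{21} b_{v_0} + h_{22} c_{v_0})$ and the normalization \eqref{mc} gives
\[
W_{v_0}\!\left(\begin{pmatrix} h & 0 \\ 0 & 1 \end{pmatrix} y_{v_0}\right) = \overline\psi_{v_0}(h_{21} b_{v_0} + h_{22} c_{v_0})\, \mathbf{1}_{K_{v_0}'^{\circ}[1]}(h x_{v_0}).
\]
Substituting into the inner product formula and using that $K_{v_0}'^{\circ}[1]$ is a subgroup (so the two indicators $\mathbf{1}_{K_{v_0}'^{\circ}[1]}(h x_{v_0})$ and $\mathbf{1}_{K_{v_0}'^{\circ}[1]}(h)$ are simultaneously nonzero precisely when both $h$ and $x_{v_0}$ lie in $K_{v_0}'^{\circ}[1]$), the factor $\mathbf{1}_{K_{v_0}'^{\circ}[1]}(x_{v_0})$ comes out and the remaining integration runs over $N_2 \backslash N_2 K_{v_0}'^{\circ}[1]$.

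I would then parametrize this coset space by lower-triangular representatives $\bigl(\begin{smallmatrix} a & 0 \\ c^* & d \end{smallmatrix}\bigr)$ with $a \in \mathcal{O}_{v_0}^{\times}$, $c^* \in \mathfrak{p}_{v_0}$, $d \in 1 + \mathfrak{p}_{v_0}$, obtained by killing the $(1,2)$-entry of an element of $K_{v_0}'^{\circ}[1]$ via left multiplication by $N_2 \cap K_{v_0}'^{\circ}[1]$; a Jacobian computation shows the induced quotient measure equals $da\, dc^*\, dd$ on $K_{v_0}'^{\circ}[1]$. The integral then separates as $\Vol(\mathcal{O}_{v_0}^{\times}) \cdot \int_{\mathfrak{p}_{v_0}} \overline\psi_{v_0}(c^* b_{v_0})\, dc^* \cdot \int_{1 + \mathfrak{p}_{v_0}} \overline\psi_{v_0}(d c_{v_0})\, dd$. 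Since $\psi_{v_0}$ may be taken unramified, the first Fourier integral equals $\Vol(\mathfrak{p}_{v_0})\, \mathbf{1}_{b_{v_0} \in \mathfrak{p}_{v_0}^{-1}}$, and the second, after substituting $d = 1 + d'$, equals $\overline\psi_{v_0}(c_{v_0})\, \Vol(\mathfrak{p}_{v_0})\, \mathbf{1}_{c_{v_0} \in \mathfrak{p}_{v_0}^{-1}}$. Recognizing $\Vol(\mathcal{O}_{v_0}^{\times}) \Vol(\mathfrak{p}_{v_0})^2 = \Vol(K_{v_0}'^{\circ}[1])$ (a direct volume computation using $\Vol(\mathcal{O}_{v_0}) = 1$) delivers \eqref{eq2.2}.

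The main technical point is the consistent bookkeeping of measures: verifying that the inner product formula above holds on the nose---without a stray scalar---in the paper's normalization, that the quotient measure on $N_2 \backslash N_2 K_{v_0}'^{\circ}[1]$ really equals $da\, dc^*\, dd$ in the chosen coordinates, and that the Fourier output $\Vol(\mathcal{O}_{v_0}^{\times})\Vol(\mathfrak{p}_{v_0})^2$ coincides with $\Vol(K_{v_0}'^{\circ}[1])$ rather than some constant multiple of it. The structural part of the argument---the reduction to Fourier integrals and the support conditions for $b_{v_0}, c_{v_0}, x_{v_0}$---is otherwise directly forced by Whittaker equivariance and the Kirillov-model normalization \eqref{mc}.
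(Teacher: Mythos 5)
Your proposal is correct and follows essentially the same route as the paper: both compute the matrix coefficient via the $\mathrm{GL}_2$-integral of the Whittaker inner product, restrict to $K_{v_0}'^{\circ}[1]$ using the Kirillov normalization \eqref{mc}, peel off the phase $\overline\psi_{v_0}(h_{21}b_{v_0}+h_{22}c_{v_0})$ by $N$-equivariance, and invoke additive-character orthogonality. Your version is actually more scrupulous about measures than the paper's (which passes somewhat informally from $\int_{N_2\backslash\mathrm{GL}_2}$ to $\int_{K_{v_0}'^{\circ}[1]}$ and carries an extraneous $|\det g|_v$---harmless here since the support forces $\det g\in\mathcal{O}_{v_0}^{\times}$), but the underlying argument is the same.
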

\begin{proof}
By definition, the matrix coefficient is
\begin{align*}
\langle \pi_{v_0}(y_{v_0})W_{v_0},W_{v_0}\rangle=\int_{N(F_{v_0})\backslash\mathrm{GL}_v(F_{v_0})}W_{v_0}\left(\begin{pmatrix}
g\\
& 1
\end{pmatrix}y_{v_0}\right)\overline{W_{v_0}\begin{pmatrix}
g\\
& 1
\end{pmatrix}}|\det g|_vdg.
\end{align*}

Substituting \eqref{mc} into the above integral yields
\begin{equation}\label{eq2.3}
\langle \pi_{v_0}(y_{v_0})W_{v_0},W_{v_0}\rangle=\int_{K_{v_0}'^{\circ}[1]}W_{v_0}\left(\begin{pmatrix}
k\\
& 1
\end{pmatrix}\begin{pmatrix}
1& & b_{v_0}\\
& 1& c_{v_0}\\
&& 1
\end{pmatrix}\begin{pmatrix}
x_{v_0}\\
& 1
\end{pmatrix}\right)dk.
\end{equation}

Write $k=\begin{pmatrix}
k_{11} & k_{12}\\
k_{21} & k_{22}
\end{pmatrix}\in K_{v_0}'^{\circ}[1]$. Then it follows from \eqref{eq2.3} that
\begin{equation}\label{eq2.4}
\langle \pi_{v_0}(y_{v_0})W_{v_0},W_{v_0}\rangle=\int_{K_{v_0}'^{\circ}[1]}\overline{\psi}_{v_0}(k_{21}b_{v_0}+k_{22}c_{v_0})W_{v_0}\begin{pmatrix}
kx_{v_0}\\
& 1
\end{pmatrix}dk.
\end{equation}

Notice that $W_{v_0}\begin{pmatrix}
kx_{v_0}\\
& 1
\end{pmatrix}=\mathbf{1}_{K_{v_0}'^{\circ}[1]}(x_{v_0})$. We thus derive \eqref{eq2.2} from \eqref{eq2.4}, along with orthogonality of the additive character $\overline{\psi}_{v_0}$.
\end{proof}

\subsection{Construction of the Test Function}\label{sec2.2}
\begin{itemize}
\item at $v=v_0$, we take
\begin{align*}
f_{v_0}(g_{v_0}):=\Vol(K_{v_0}'^{\circ}[1])^{-2}\langle \pi_{v_0}(g_{v_0})W_{v_0},W_{v_0}\rangle,\ \ g_{v_0}\in \mathrm{GL}_3(F_{v_0}).
\end{align*}
\item at $v\mid\mathfrak{N}$, we take
\begin{align*}
f_v(g_v):=\Vol(I_v[1])^{-1}\int_{F_v^{\times}}\mathbf{1}_{I_v[1]}(z_vg_v)d^{\times}z_v,\ \ g_v\in \mathrm{GL}_3(F_v),
\end{align*}
where $I_v[1]$ is the Iwahori subgroup of level $1$.

\item at $v\in \Sigma_{\chi_{\fin}}\bigcup \Sigma_{\sigma_{\fin}}$, we take
\begin{align*}
f_v(g_v):=&\frac{1}{\tau(\chi_v)}\sum_{\alpha_v\in (\mathcal{O}_v/\mathfrak{p}_v^{r_{\chi_v}})^{\times}}\chi_v(\alpha)\sum_{\beta_v\in \mathcal{O}_v/\mathfrak{p}_v^{r_{\sigma_v}}}\int_{F_v^{\times}}\\
&\qquad \qquad (\mathbf{1}_{U_1(\alpha_v\varpi_v^{-r_{\chi_v}})K_v}*\mathbf{1}_{U_2(\beta_v\varpi_v^{-r_{\sigma_v}})K_v})(z_vg_v)d^{\times}z_v,
\end{align*}
where $\tau(\chi_v)$ is the Gauss sum relative to $\chi_v$ and $\psi_v$, and  for $\gamma_v\in F_v$,
\begin{equation}\label{eq2.5}
U_1(\gamma_v):=\begin{pmatrix}
1&\gamma_v & \\
&1 \\
&& 1
\end{pmatrix},\ \ \ U_2(\gamma_v):=\begin{pmatrix}
1& & \gamma_v\\
&1 \\
&& 1
\end{pmatrix}.
\end{equation}

\item at the remaining finite places $v$, we take
\begin{align*}
f_v(g_v):=\int_{F_v^{\times}}\mathbf{1}_{\mathrm{GL}_3(\mathcal{O}_v)}(z_vg_v)d^{\times}z_v,\ \ g_v\in \mathrm{GL}_3(F_v).
\end{align*}

\item at Archimedean places $v\mid\infty$ we take
\begin{equation}\label{eq2.6}
f_v(g_v):=\int_{F_v^{\times}}h_v(z_vg_v)d^{\times}z_v,\ \ g_v\in \mathrm{GL}_3(F_v),
\end{equation}
where $h_v\in C_c^{\infty}(\mathrm{GL}_3(F_v))$ is a fixed smooth function,  with $h_v(I_3)=1$, and supported in an $\eps$-neighborhood of the identity $I_3$ (ie. defined by $\|g_v\|_v\leq \eps$ for some fixed norm on $\mathrm{M}_3(F_v)$) with $\eps>0$ fixed sufficiently small.
\end{itemize}

Let $f=\otimes_{v\leq \infty}f_v$ be the test function constructed as above. Henceforth we will work below with this test function in the relative trace formula, or RTF for short.

\subsection{The Relative Trace Formula}\label{sec2.3}
Let $f$ be the test function defined in \textsection\ref{sec2.2}. Let
\begin{equation}\label{geom}
\K(g_1,g_2):=\sum_{\gamma\in \mathrm{PGL}_3(F)}f(g_1^{-1}\gamma g_2),\ \ g_1, g_2\in \mathrm{GL}_3(\mathbb{A}_F)
\end{equation}
be the automorphic kernel function associated with $f$. Since $f$ has compact support, $\K(g_1,g_2)$ converges absolutely.

Let $\mathbf{s}=(s_1,s_2)\in \mathbb{C}^2$ be such that $\Re(s_1)\gg 1$ and $\Re(s_2)\gg 1$. Let $\phi$ be an automorphic form in $\sigma$. Define
\begin{align*}
J(f,\phi,\boldsymbol{s}):=\int_{F^{\times}\backslash\mathbb{A}_F^{\times}} d^{\times}a \int_{(F\backslash\mathbb{A}_F)^2} db dc
\int_{[\mathrm{GL}_2]}&\K\left(\begin{pmatrix}
a& & b\\
& 1& c\\
&& 1
\end{pmatrix},\begin{pmatrix}
y&\\
& 1
\end{pmatrix}\right)\chi(a)\overline{\psi}(c)\\
& |a|^{s_1-\frac{1}{2}}\phi(y)|\det y|^{s_2}d y.
\end{align*}

\subsection{The Spectral Side}
Since $f_v$ is a matrix coefficient of a supercuspidal representation at $v=v_0$, only the cuspidal spectrum appears in the spectral decomposition of $\K(\cdot,\cdot)$. Hence,
\begin{equation}\label{f2.8}
\K(g_1,g_2)=\sum_{\pi}\sum_{\varphi\in \mathfrak{B}_{\pi}^{I(\mathfrak{N})}}\pi(f)\varphi(g_1)\overline{\varphi(g_2)},
\end{equation}
where $\mathfrak{B}_{\pi}$ is an orthonormal basis of $\pi$,  invariant under the action of $I(\mathfrak{N}):=\prod_{v<\infty}K_v[e_v(\mathfrak{N})]$. Substituting \eqref{f2.8} into the definition of $J(f,\phi,\boldsymbol{s})$ yields
\begin{equation}\label{f2.9}
J(f,\phi,\boldsymbol{s})=\sum_{\pi}\sum_{\varphi\in \mathfrak{B}_{\pi}^{I(\mathfrak{N})}}\mathcal{P}_1(1/2+s_1,\pi(f)\varphi,\chi)\mathcal{P}_2(1/2+s_2,\overline{\varphi},\phi),
\end{equation}
where $\mathcal{P}_1(1/2+s_1,\pi(f)\varphi,\chi)$ is defined by
\begin{align*}
\int_{F^{\times}\backslash\mathbb{A}_F^{\times}}\int_{(F\backslash\mathbb{A}_F)^2}\pi(f)\varphi\begin{pmatrix}
a& & b\\
& 1& c\\
&& 1
\end{pmatrix}\chi(a)\overline{\psi}(c)|a|^{s_1-\frac{1}{2}}dbdcd^{\times}a,
\end{align*}
and $\mathcal{P}_2(1/2+s_2,\overline{\varphi},\phi)$ is defined by
\begin{align*}
\int_{[\mathrm{GL}_2]}&\overline{\varphi\begin{pmatrix}
y&\\
& 1
\end{pmatrix}}\phi(y)|\det y|^{s_2}dy.
\end{align*}

Notice that $\mathcal{P}_1(1/2+s_1,\pi(f)\varphi,\chi)$ is the Rankin-Selberg integral representing the $L$-function $L(1/2+s_1,\pi\times\chi)$, and $\mathcal{P}_2(1/2+s_2,\overline{\varphi},\phi)$ represents $L(1/2+s_2,\widetilde{\pi}\times\sigma)$, where $\widetilde{\pi}$ is the contragredient of $\pi$.

Due to the rapid decay of cusp forms and Rankin-Selberg theory, the function $J(f,\phi,\boldsymbol{s})$ converges absolutely in $\Re(s_1)\gg 1$ and $\Re(s_2)\gg 1$, and admits a holomorphic continuation to $\mathbb{C}^2$.

\subsection{The Geometric Side}\label{sec2.5}
Let $P$ be the parabolic subgroup of $\mathrm{GL}_3$ of type $(2,1)$, and let $P_0$ be the mirabolic subgroup. Let $ N_{w_2} $ be the unipotent subgroup consisting of matrices of the form $\begin{pmatrix}
1 & & \\
& 1 & * \\
&& 1
\end{pmatrix}$, and let $N_{w_1w_2}$ be the unipotent subgroup consisting of matrices of the form $\begin{pmatrix}
1 & * & * \\
& 1 & \\
&& 1
\end{pmatrix}$.

Then, we have the Bruhat decomposition:
\begin{equation}\label{Bruhat}
\mathrm{GL}_3(F) = P(F) \sqcup N_{w_2}(F) w_2 P(F) \sqcup N_{w_1w_2}(F) w_1 w_2 P(F).
\end{equation}

Substituting \eqref{geom} and \eqref{Bruhat} into the definition of $J(f,\phi,\boldsymbol{s})$ yields
\begin{equation}\label{eq2.7}
J(f,\phi,\boldsymbol{s})=J_{\sm}(f,\phi,\boldsymbol{s})+J_{\Reg}^{\RNum{1}}(f,\phi,\boldsymbol{s})+J_{\Reg}^{\RNum{2}}(f,\phi,\boldsymbol{s}),
\end{equation}
where
\begin{align*}
&J_{\sm}(f,\phi,\boldsymbol{s}):=\int_{F^{\times}\backslash\mathbb{A}_F^{\times}}\int_{(F\backslash\mathbb{A}_F)^2}\int_{[\mathrm{GL}_2]}\sum_{\gamma\in P_0(F)}f\left(\begin{pmatrix}
a& & b\\
& 1& c\\
&& 1
\end{pmatrix}^{-1}\gamma\begin{pmatrix}
y&\\
& 1
\end{pmatrix}\right)\\
&\qquad \qquad \qquad \qquad \chi(a)\overline{\psi}(c)|a|^{s_1-\frac{1}{2}}\phi(y)|\det y|^{s_2}d y dbdcd^{\times}a,
\end{align*}
the function $J_{\Reg}^{\RNum{1}}(f,\phi,\boldsymbol{s})$ is defined by
\begin{align*}
&\int_{F^{\times}\backslash\mathbb{A}_F^{\times}}\int_{(F\backslash\mathbb{A}_F)^2}\int_{[\mathrm{GL}_2]}\sum_{\gamma\in N_{w_2}(F)w_2P_0(F)}f\left(\begin{pmatrix}
a& & b\\
& 1& c\\
&& 1
\end{pmatrix}^{-1}\gamma\begin{pmatrix}
y&\\
& 1
\end{pmatrix}\right)\\
&\qquad \qquad \qquad \qquad \chi(a)\overline{\psi}(c)|a|^{s_1-\frac{1}{2}}\phi(y)|\det y|^{s_2}d y dbdcd^{\times}a,
\end{align*}
and the function $J_{\Reg}^{\RNum{2}}(f,\phi,\boldsymbol{s})$ is defined by
\begin{align*}
&\int_{F^{\times}\backslash\mathbb{A}_F^{\times}}\int_{(F\backslash\mathbb{A}_F)^2}\int_{[\mathrm{GL}_2]}\sum_{\gamma\in N_{w_1w_2}(F)w_1w_2P_0(F)}f\left(\begin{pmatrix}
a& & b\\
& 1& c\\
&& 1
\end{pmatrix}^{-1}\gamma\begin{pmatrix}
y&\\
& 1
\end{pmatrix}\right)\\
&\qquad \qquad \qquad \qquad \chi(a)\overline{\psi}(c)|a|^{s_1-\frac{1}{2}}\phi(y)|\det y|^{s_2}d y dbdcd^{\times}a.
\end{align*}

We will refer to $J_{\sm}(f,\phi,\boldsymbol{s})$ as the small cell orbital integral, while $J_{\Reg}^{\RNum{1}}(f,\phi,\boldsymbol{s})$ and $J_{\Reg}^{\RNum{2}}(f,\phi,\boldsymbol{s})$ will be called the regular orbital integrals of type  $\RNum{1}$ and $\RNum{2}$, respectively.

In the following sections, we will show that $J_{\sm}(f,\phi,\boldsymbol{s})$ is the main term on the geometric side \eqref{eq2.7}, and that
\begin{align*}
J_{\Reg}^{\RNum{1}}(f,\phi,\boldsymbol{s})\equiv J_{\Reg}^{\RNum{2}}(f,\phi,\boldsymbol{s})\equiv 0
\end{align*}
as the absolute norm $N_F(\mathfrak{N})$ becomes sufficiently large.

\section{The Small Cell Orbital Integral}
In this section we will investigate the small cell orbital integral
\begin{multline*}
	J_{\sm}(f,\phi,\boldsymbol{s}):=\int_{F^{\times}\backslash\mathbb{A}_F^{\times}}\int_{(F\backslash\mathbb{A}_F)^2}\int_{[\mathrm{GL}_2]}\sum_{\gamma\in P_0(F)}f\left(\begin{pmatrix}
a& & b\\
& 1& c\\
&& 1
\end{pmatrix}^{-1}\gamma\begin{pmatrix}
y&\\
& 1
\end{pmatrix}\right)\\
 \times\ \  \chi(a)\overline{\psi}(c)|a|^{s_1-\frac{1}{2}}\phi(y)|\det y|^{s_2}d y dbdcd^{\times}a.
\end{multline*}
\begin{prop}\label{prop3.1}
Let notation be as before. Then $J_{\sm}(f,\phi,\boldsymbol{s})$ converges absolutely in $\Re(s_1)\gg 1$ and $\Re(s_2)\gg 1$. Moreover, it admits a holomorphic continuation to $\mathbb{C}^2$, and
\begin{equation}\label{equ3.1}
J_{\sm}(f,\phi,\boldsymbol{0})\gg N_F(\mathfrak{N})^{3},
\end{equation}
where the implied constant depends only on $\chi,$ $\phi$, and $h_v$'s at $v\mid\infty$.
\end{prop}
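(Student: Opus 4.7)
The plan is to unfold the $\gamma$-sum against the outer integrations, obtain a fully Eulerian factorization via the Whittaker expansion of $\phi$, and then evaluate each local factor at $\boldsymbol{s}=\boldsymbol{0}$.

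First I would parametrize $P_0(F)=\mathrm{GL}_2(F)\ltimes F^2$ by $\gamma=\begin{pmatrix} y_0 & v\\ 0 & 1\end{pmatrix}$ with $y_0\in \mathrm{GL}_2(F)$ and $v=(v_1,v_2)^t\in F^2$; a direct matrix calculation shows the argument of $f$ equals
\begin{equation*}
\begin{pmatrix} a^{-1}(y_0y)_{11} & a^{-1}(y_0y)_{12} & a^{-1}(v_1-b)\\ (y_0y)_{21} & (y_0y)_{22} & v_2-c\\ 0 & 0 & 1\end{pmatrix}.
\end{equation*}
The sum over $y_0$ folds with $\int_{[\mathrm{GL}_2]}$ via the left-$\mathrm{GL}_2(F)$-invariance of $\phi$ into $\int_{\mathrm{GL}_2(\mathbb{A}_F)}$; the sums over $v_1,v_2\in F$ fold $(F\backslash\mathbb{A}_F)^2$ into $\mathbb{A}_F^2$ (using $\overline{\psi}|_F\equiv 1$). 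I would then substitute the Whittaker expansion $\phi(y)=\sum_{\alpha\in F^\times}W_\phi(\mathrm{diag}(\alpha,1)y)$ and change variables $y\to \mathrm{diag}(\alpha^{-1},1)y$ and $b\to \alpha^{-1}b$; a short computation shows the integrand then depends on $(a,\alpha)$ only through the product $a\alpha$ (using $\chi(\alpha)=|\alpha|=1$ for $\alpha\in F^\times$), so the $\alpha$-sum absorbs into the $a$-integral via $\sum_\alpha \int_{F^\times\backslash\mathbb{A}_F^\times}=\int_{\mathbb{A}_F^\times}$. This yields the Eulerian factorization $J_{\sm}(f,\phi,\boldsymbol{s})=\prod_v J_v(\boldsymbol{s})$ with each $J_v$ a local integral over $F_v^\times\times F_v^2\times \mathrm{GL}_2(F_v)$.

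Absolute convergence for $\Re(s_1),\Re(s_2)\gg 1$ follows from the standard Rankin-Selberg estimates on $W_\phi$. Holomorphic continuation to $\mathbb{C}^2$ follows since at cofinitely many places $J_v$ coincides with the local Euler factor of $L(s_1+s_2+1,\sigma_v\times\chi_v)$, whose global counterpart is entire, while the finitely many remaining factors are explicit analytic expressions. For the lower bound at $\boldsymbol{s}=\boldsymbol{0}$ I would evaluate each $J_v$ place by place. At $v=v_0$, Lemma \ref{lem2.1} writes $f_{v_0}$ at the relevant argument as $\psi_{v_0}(c)$ times indicator functions of $\mathfrak{p}_{v_0}^{-1}$ (for $b,c$) and of $K_{v_0}'^{\circ}[1]$ (for $\mathrm{diag}(a^{-1},1)y$); the $\psi_{v_0}$ exactly cancels the $\overline{\psi}_{v_0}$ in the integrand, and right-$K_{v_0}'$-invariance of the unramified Whittaker of $\sigma_{v_0}$ collapses the $y$-integral, leaving a nonzero multiple of $L(1,\sigma_{v_0}\times\chi_{v_0})$, a fixed constant. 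At $v\mid\mathfrak{N}$, the support of $f_v$ on $Z_v I_v[1]$ forces $y$ into $\mathrm{diag}(a,1)\cdot(\text{Iwahori of }\mathrm{GL}_2(\mathcal{O}_v))$; again the unramified right-$K_v'$-invariance of $W_v$ collapses $y$, and combining the normalization $\Vol(I_v[1])^{-1}$ with the shifted-Iwahori volume, the $|a|$-factor from the $b$-integration, and the residual $L(1,\sigma_v\times\chi_v)\sim 1$ produces the dominant power $N_F(\mathfrak{N})^3$. At $v\in \Sigma_{\chi_{\fin}}\cup \Sigma_{\sigma_{\fin}}$ the Gauss-sum structure of $f_v$ yields a nonzero local constant, and at Archimedean $v$ the bump function $h_v$ with $h_v(I_3)=1$ contributes positively.

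The main obstacle will be the careful volume bookkeeping at $v\mid\mathfrak{N}$: one must verify that the product of $\Vol(I_v[1])^{-1}$, the shifted-Iwahori volume for $y$, the $|a|$-factor from the $b$-integration, and the residual local $L$-value indeed produces the full power $N_F(\mathfrak{N})^3$ to leading order, rather than a smaller power. A subsidiary point is the non-vanishing of the global constant, which reduces to $L(1,\sigma\times\chi)\neq 0$—a standard fact since $\sigma\times\chi$ has no trivial factor. Putting all these local contributions together yields $J_{\sm}(f,\phi,\boldsymbol{0})\gg N_F(\mathfrak{N})^3$.
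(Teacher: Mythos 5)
Your proposal takes essentially the same route as the paper: unfold the mirabolic sum and the $(b,c)$-integrals, Eulerianize via the Whittaker expansion of $\phi$ (the reordering of the Whittaker step relative to the changes of variables is cosmetic, as is appealing to the entirety of $L(s,\sigma\times\chi)$ for the continuation rather than the paper's direct absolute-convergence argument), and then evaluate each local factor as an explicit multiple of $L_v(1+s_1+s_2,\sigma_v\times\chi_v)$. The volume bookkeeping at $v\mid\mathfrak{N}$ that you defer as the ``main obstacle'' is carried out in the paper's \eqref{eq3.3}, which identifies the local factor at unramified $v\neq v_0$ as $c_{F_v}\Vol(I_v[e_v(\mathfrak{N})])^{-1}L_v(1+s_1+s_2,\sigma_v\times\chi_v)$, so the power of $N_F(\mathfrak{N})$ is exactly the index of the level-$\mathfrak{N}$ parahoric in $\mathrm{GL}_3(\mathcal{O}_{\mathfrak{N}})$.
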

\begin{proof}
By the change of variables $b\mapsto -ab$, $c\mapsto -c$, and  $y\mapsto ay$, we have
\begin{align*}
J_{\sm}(f,\phi,\boldsymbol{s})=&\int_{\mathbb{A}_F^2}\int_{\mathrm{GL}_2(\mathbb{A}_F)}f\left(\begin{pmatrix}
1& & b\\
& 1& c\\
&& 1
\end{pmatrix}\begin{pmatrix}
y&\\
& 1
\end{pmatrix}\right)\psi(c) \mathcal{P}(y;\phi,\mathbf{s})|\det y|^{s_2}d y dbdc,
\end{align*}
where
\begin{align*}
\mathcal{P}(y;\phi,\mathbf{s}):=\int_{F^{\times}\backslash\mathbb{A}_F^{\times}}\phi\left(\begin{pmatrix}
a\\
&1
\end{pmatrix}y\right)\chi(a)|a|^{s_1+s_2+1/2}d^{\times}a.
\end{align*}

Since $\phi$ is a cusp form, $\mathcal{P}(y;\phi,\mathbf{s})$ converges absolutely for each $y$, defining a continuous function of $y$. Also, the construction of $f$ ensures that
\begin{align*}
f\left(\begin{pmatrix}
1& & b\\
& 1& c\\
&& 1
\end{pmatrix}\begin{pmatrix}
y&\\
& 1
\end{pmatrix}\right)\equiv 0
\end{align*}
unless $y$ lies in a compact set of $\mathrm{GL}_2(\mathbb{A}_F)$, and $(b,c)$ lie in a certain compact set of $\mathbb{A}_F^2$. Hence, the integral $J_{\sm}(f,\phi,\boldsymbol{s})$ converges absolutely for all $\mathbf{s}\in \mathbb{C}^2$.

Utilizing the Fourier expansion of $\phi$ we obtain
\begin{align*}
\mathcal{P}(y;\phi,\mathbf{s})=\int_{\mathbb{A}_F^{\times}}W\left(\begin{pmatrix}
a\\
&1
\end{pmatrix}y\right)\chi(a)|a|^{s_1+s_2+1/2}d^{\times}a,
\end{align*}
where $W$ is the Whittaker function of $\phi$ relative to the additive character $\psi$.  Therefore, we obtain the local factorization:
\begin{align*}
J_{\sm}(f,\phi,\boldsymbol{s})=\prod_{v\leq \infty}J_{\sm,v}(f,\phi,\boldsymbol{s}), \ \ \ \Re(s_1)\gg 1,\ \Re(s_2)\gg 1,
\end{align*}
where
\begin{align*}
J_{\sm,v}(f,\phi,\boldsymbol{s}):=&\int_{\mathrm{GL}_2(F_v)}\int_{F_v^2}f_v\left(\begin{pmatrix}
1& & b_v\\
& 1& c_v\\
&& 1
\end{pmatrix}\begin{pmatrix}
y_v&\\
& 1
\end{pmatrix}\right)\psi_v(c_v) db_vdc_v\\
&\int_{F^{\times}}W_v\left(\begin{pmatrix}
a_v\\
&1
\end{pmatrix}y_v\right)\chi_v(a_v)|a_v|_v^{s_1+s_2+1/2}d^{\times}a_v
|\det y_v|_v^{s_2}dy_v.
\end{align*}

\begin{itemize}
\item Let $v=v_0$. By Lemma \ref{lem2.1}, we have
\begin{align*}
\overline{\psi}_{v}(c_{v})\Vol(K_{v}'^{\circ}[1])\mathbf{1}_{b_{v}, c_{v}\in \mathfrak{p}_{v}^{-1}}\mathbf{1}_{K_{v}'^{\circ}[1]}(y_{v})
\end{align*}	
\begin{align*}
J_{\sm,v}(f,\phi,\boldsymbol{s})=&\Vol(K_{v}'^{\circ}[1])^{-1}\int_{K_{v}'^{\circ}[1]}\int_{\mathfrak{p}_{v}^{-1}}\int_{\mathfrak{p}_{v}^{-1}}db_vdc_v\\
&\int_{F^{\times}}W_v\left(\begin{pmatrix}
a_v\\
&1
\end{pmatrix}y_v\right)\chi_v(a_v)|a_v|_v^{s_1+s_2+1/2}d^{\times}a_vdy_v,
\end{align*}
which simplifies to
\begin{equation}\label{eq3.1}
J_{\sm,v}(f,\phi,\boldsymbol{s})=q_v^2L_v(1+s_1+s_2,\sigma_v\times\chi_v).
\end{equation}
	
\item Let $v\in \Sigma_{\chi_{\fin}}\bigcup \Sigma_{\sigma_{\fin}}$. Then
\begin{align*}
J_{\sm,v}(f,\phi,\boldsymbol{s})=&\frac{1}{\tau(\chi_v)}\sum_{\alpha_v\in (\mathcal{O}_v/\mathfrak{p}_v^{r_{\chi_v}})^{\times}}\chi_v(\alpha)\sum_{\beta_v\in \mathcal{O}_v/\mathfrak{p}_v^{r_{\sigma_v}}}\int_{\mathrm{GL}_2(F_v)}\int_{F_v^2}\int_{F_v^{\times}}\psi_v(c_v) \\
&\mathbf{1}_{K_v}\left(z_vU_1(-\alpha_v\varpi_v^{-r_{\chi_v}})\begin{pmatrix}
1& & b_v\\
& 1& c_v\\
&& 1
\end{pmatrix}\begin{pmatrix}
y_v&\\
& 1
\end{pmatrix}U_2(\beta_v\varpi_v^{-r_{\sigma_v}})\right)d^{\times}z_v\\
&db_vdc_v\int_{F^{\times}}W_v\left(\begin{pmatrix}
a_v\\
&1
\end{pmatrix}y_v\right)\chi_v(a_v)|a_v|_v^{s_1+s_2+1/2}d^{\times}a_v
|\det y_v|_v^{s_2}dy_v,
\end{align*}
where $U_1(-\alpha_v\varpi_v^{-r_{\chi_v}})$ and $U_2(\beta_v\varpi_v^{-r_{\sigma_v}})$ are defined by \eqref{eq2.5}.
	
By the change of variable $y_v\mapsto U_1(\alpha_v\varpi_v^{-r_{\chi_v}})y_v$, along with
\begin{align*}
W_v\left(\begin{pmatrix}
a_v\\
&1
\end{pmatrix}U_1(\alpha_v\varpi_v^{-r_{\chi_v}})y_v\right)=\psi_v(a_v\alpha_v\varpi_v^{-r_{\chi_v}})W_v\left(\begin{pmatrix}
a_v\\
&1
\end{pmatrix}y_v\right),
\end{align*}
we thus simplify $J_{\sm,v}(f,\phi,\boldsymbol{s})$ to
\begin{align*}
&\sum_{\beta_v\in \mathcal{O}_v/\mathfrak{p}_v^{r_{\sigma_v}}}\int_{\mathrm{GL}_2(F_v)}\int_{F_v^2}\mathbf{1}_{K_v}\left(\begin{pmatrix}
1& & b_v\\
& 1& c_v\\
&& 1
\end{pmatrix}\begin{pmatrix}
y_v&\\
& 1
\end{pmatrix}U_2(\beta_v\varpi_v^{-r_{\sigma_v}})\right)\psi_v(c_v) db_vdc_v\\
&\qquad \int_{F^{\times}}W_v\left(\begin{pmatrix}
a_v\\
&1
\end{pmatrix}y_v\right)\chi_v(\varpi_v^{e_v(a_v)})|a_v|_v^{s_1+s_2+1/2}d^{\times}a_v
|\det y_v|_v^{s_2}dy_v.
\end{align*}

Write $y=\begin{pmatrix}
y_{11} & y_{12}\\
y_{21} & y_{22}
\end{pmatrix}$. By the change of variable $\begin{pmatrix}
b_v\\
c_v
\end{pmatrix}\mapsto \begin{pmatrix}
b_v\\
c_v
\end{pmatrix}-y_v\begin{pmatrix}
\beta_v\varpi_v^{-r_{\sigma_v}})\\
0	
\end{pmatrix}$,
we obtain
\begin{align*}
J_{\sm,v}(f,\phi,\boldsymbol{s})=&\sum_{\beta_v\in \mathcal{O}_v/\mathfrak{p}_v^{r_{\sigma_v}}}\int_{K_v}\int_{\mathcal{O}_v^2}\psi_v(c_v-y_{21}\beta_v\varpi_v^{-r_{\sigma_v}}) db_vdc_v\\
&\qquad \int_{F^{\times}}W_v\left(\begin{pmatrix}
a_v\\
&1
\end{pmatrix}y_v\right)\chi_v(\varpi_v^{e_v(a_v)})|a_v|_v^{s_1+s_2+1/2}d^{\times}a_v
dy_v,
\end{align*}
which, due to a straightforward calculation, boils down to
\begin{equation}\label{eq3.2}
J_{\sm,v}(f,\phi,\boldsymbol{s})=q_v^{r_{\sigma_v}}\Vol(K_v[r_{\sigma_v}])L_v(1+s_1+s_2,\sigma_v\times\chi_v).
\end{equation}

\item Let $v<\infty$ and $v\notin \Sigma_{\chi_{\fin}}\bigcup \Sigma_{\sigma_{\fin}}\bigcup \{v_0\}$. Then
\begin{align*}
J_{\sm,v}(f,\phi,\boldsymbol{s})=&\Vol(I_v[e_v(\mathfrak{N})])^{-1}\int_{\mathrm{GL}_2(F_v)}\int_{\mathcal{O}_v^2}\mathbf{1}_{I_v[e_v(\mathfrak{N})]}\left(\begin{pmatrix}
y_v&\\
& 1
\end{pmatrix}\right)\psi_v(c_v) db_vdc_v\\
&\int_{F^{\times}}W_v\left(\begin{pmatrix}
a_v\\
&1
\end{pmatrix}y_v\right)\chi_v(a_v)|a_v|_v^{s_1+s_2+1/2}d^{\times}a_v
|\det y_v|_v^{s_2}dy_v.
\end{align*}

As a consequence, we obtain
\begin{equation}\label{eq3.3}
J_{\sm,v}(f,\phi,\boldsymbol{s})=c_{F_v}\Vol(I_v[e_v(\mathfrak{N})])^{-1}L_v(1+s_1+s_2,\sigma_v\times\chi_v),
\end{equation}
where $c_{F_v}$ is a positive constant depending only on the discriminant of $F$. In particular, $c_{F_v}= 1$ if $v$ is not a ramified place.

\item Let $v\mid\infty$. By definition of $f_v$ (cf. \eqref{eq2.6}), we have
\begin{align*}
\int_{F_v^2}f_v\left(\begin{pmatrix}
1& & b_v\\
& 1& c_v\\
&& 1
\end{pmatrix}\begin{pmatrix}
y_v&\\
& 1
\end{pmatrix}\right)\psi_v(c_v) db_vdc_v\equiv 0
\end{align*}
unless $\|y_v-I_2\|_v\ll \varepsilon$, and
\begin{equation}\label{eq3.4}
\bigg|\int_{\mathrm{GL}_2(F_v)}\int_{F_v^2}f_v\left(\begin{pmatrix}
1& & b_v\\
& 1& c_v\\
&& 1
\end{pmatrix}\begin{pmatrix}
y_v&\\
& 1
\end{pmatrix}\right)\psi_v(c_v) db_vdc_v
|\det y_v|_v^{s_2}dy_v\bigg|\gg 1.
\end{equation}

Let$|\Re(s_1)|, |\Re(s_2)|\leq 1$. By the mean value theorem,
\begin{align*}
\bigg|J_{\sm,v}(f,\phi,\boldsymbol{s})-&\int_{\mathrm{GL}_2(F_v)}\int_{F_v^2}f_v\left(\begin{pmatrix}
1& & b_v\\
& 1& c_v\\
&& 1
\end{pmatrix}\begin{pmatrix}
y_v&\\
& 1
\end{pmatrix}\right)\psi_v(c_v) db_vdc_v\\
&\int_{F^{\times}}W_v\begin{pmatrix}
a_v\\
&1
\end{pmatrix}\chi_v(a_v)|a_v|_v^{s_1+s_2+1/2}d^{\times}a_v
|\det y_v|_v^{s_2}dy_v\bigg|\ll \varepsilon,
\end{align*}
where the implied constant depends on $\varepsilon$, $f_v$ and $\sigma$. In conjunction with \eqref{eq3.4}, we deduce, by taking $\varepsilon$ to be sufficiently small, that
\begin{equation}\label{eq3.5}
J_{\sm,v}(f,\phi,\boldsymbol{s})\gg \Big|\int_{F^{\times}}W_v\begin{pmatrix}
a_v\\
&1
\end{pmatrix}\chi_v(a_v)|a_v|_v^{s_1+s_2+1/2}d^{\times}a_v\Big|\gg 1.	
\end{equation}
Here the last inequality is a consequence of the definition of $W_v$ in \textsection\ref{sec2.1.2}.
\end{itemize}

Therefore, \eqref{eq3.1} follows from \eqref{eq3.1}, \eqref{eq3.2}, \eqref{eq3.3} and \eqref{eq3.5}.
\end{proof}

\section{Regular Orbital Integrals of Type \RNum{1}}
Let $J_{\Reg}^{\RNum{1}}(f,\phi,\boldsymbol{s})$ be the orbital integral defined as in \textsection\ref{sec2.5}. In this section, we aim to show that $J_{\Reg}^{\RNum{1}}(f,\phi,\boldsymbol{s})$ converges absolutely in $\Re(s_1)\gg 1$ and $\Re(s_2)\gg 1$, and moreover,  that it vanishes in this region.
\subsection{Local Analysis}\label{sec4.1}
Let $f$ be the test function constructed in \textsection\ref{sec2.2}.
Let $C=\otimes_{v\leq\infty}C_v$, where $C_v$ is a compact subset of $G(F_v)$ with  $Z(F_v)C_v=\supp f_v$. In particular, $C_v=K_v$ if $v<\infty$ and $v\not\in \Sigma_{\chi_{\fin}}\bigcup \Sigma_{\sigma_{\fin}}$ or $v\nmid v_0\mathfrak{N}$. Hence,
\begin{equation}\label{4.1}
f\left(\begin{pmatrix}
y_1&y_1u+y_2b & b\\
& y_2c & \beta^{-1}+c\\
& y_2& 1
\end{pmatrix}\begin{pmatrix}
k\\
&1
\end{pmatrix}\right)\equiv 0
\end{equation}
unless there exists some $z_v\in F_v^{\times}$ such that
\begin{equation}\label{eq4.2}
z_v\begin{pmatrix}
y_{1,v}&y_{1,v}u_v+y_{2,v}b_v& b_v\\
& y_{2,v}c_v & \beta^{-1}+c_v\\
& y_{2,v}& 1
\end{pmatrix}\in C_v\begin{pmatrix}
K_v'\\
& 1
\end{pmatrix},
\end{equation}
where $K_v'$ is a maximal compact subgroup of $\mathrm{GL}_2(F_v)$.

\subsubsection{Archimedean plaves}\label{sec4.1.1}
Let $v\mid\infty$. By \eqref{eq4.2}, we obtain
\begin{align*}
\begin{cases}
|z_vy_{1,v}|_v\asymp 1,\ \ |z_v^2y_{2,v}\beta^{-1}|_v\asymp 1\\
|z_v(y_{1,v}u_v+y_{2,v}b_v)|_v\ll 1,\ \ |z_vb_v|_v\ll 1\\
|z_vy_{2,v}|_v\ll 1,\ \ |z_vy_{2,v}c_v|_v\ll 1,\ \ |z_v|_v\ll 1,\ \ |z_v(\beta^{-1}+c_v)|_v\ll 1,
\end{cases}
\end{align*}
where the implied constant depends only on $h_v\in C_c^{\infty}(\mathrm{GL}_3(F_v))$ (cf. \eqref{eq2.6}). So
\begin{equation}\label{equa4.2}
\begin{cases}
|z_v|_v\ll 1,\ \ |z_v^2y_{2,v}|_v\asymp |\beta|_v,\ \ |z_vy_{2,v}|_v\ll 1\ \Rightarrow\ |\beta|_v\ll |z_v|_v\ll 1\\
|z_v(y_{1,v}u_v+y_{2,v}b_v)|_v\ll 1,\ \ |z_vb_v|_v\ll 1\\
|z_vy_{1,v}|_v\asymp 1,\ \ |z_vy_{2,v}c_v|_v\ll 1,\ \ |z_v(\beta^{-1}+c_v)|_v\ll 1.
\end{cases}
\end{equation}

\subsubsection{Non-Archimedean places: ramification}
Let $v\in \Sigma_{\chi_{\fin}}\bigcup \Sigma_{\sigma_{\fin}}$ or $v\mid v_0$. Write $l=e_v(z_v)$. It follows from \eqref{eq4.2} that
\begin{equation}\label{eq4.3}
\begin{cases}
|2l+e_v(y_{2,v})-e_v(\beta)|\ll 1\\
|l+e_v(y_{1,v})|\ll 1\\
l+e_v(y_{2,v})\gg -1\\
l+e_v(y_{2,v})+e_v(c_v)\gg -1\\
l\gg -1\\
l+e_v(\beta^{-1}+ c_v)\gg -1\\
l+e_v(y_{1,v}u_v+y_{2,v}b_v)\gg -1\\
l+e_v(b_v)\gg -1,
\end{cases}
\end{equation}
where the implied constants depend on $\chi$, $\pi_{v_0}$, $\sigma$, and $F$. It follows from \eqref{eq4.3}  that
\begin{equation}\label{eq4.4}
\begin{cases}
e_v(\beta)\gg -1,\ \ -1\ll l\ll 1+|e_v(\beta)|\\
|2l+e_v(y_{2,v})-e_v(\beta)|\ll 1\\
|l+e_v(y_{1,v})|\ll 1\\
l+e_v(y_{2,v})+e_v(c_v)\gg -1\\
l+e_v(\beta^{-1}+ c_v)\gg -1\\
l+e_v(y_{1,v}u_v+y_{2,v}b_v)\gg -1\\
l+e_v(b_v)\gg -1.
\end{cases}
\end{equation}

In particular, for each $\beta$, the constraints \eqref{eq4.4} forces the variables $y_{1,v}$, $y_{2,v}$, $u_v$, $c_v$ and $b_v$ lie in compact sets depending only on $\chi$, $\pi_{v_0}$, $\sigma$, $F$, and $\beta$.

\subsubsection{Non-Archimedean places: unramified places}
Let $v<\infty$ with $v\not\in \Sigma_{\chi_{\fin}}\bigcup \Sigma_{\sigma_{\fin}}$ and $v\nmid v_0$. Write $l=e_v(z_v)$. In this case we have $C_v=K_v[e_v(\mathfrak{N})]$. So \eqref{eq4.2} boils down to
\begin{equation}\label{f4.5}
\begin{cases}
e_v(\beta)=2l+e_v(y_{2,v})\\
l+e_v(y_{2,v})\geq e_v(\mathfrak{N})\\
l+e_v(y_{1,v})=0\\
l+e_v(y_{2,v})+e_v(c_v)\geq 0\\
l\geq 0\\
l+e_v(\beta^{-1}+ c_v)\geq 0\\
l+e_v(y_{1,v}u_v+y_{2,v}b_v)\geq 0\\
l+e_v(b_v)\geq 0.
\end{cases}
\end{equation}
In particular, we have $e_v(\beta)\geq e_v(\mathfrak{N})$. Moreover, when $e_v(\mathfrak{N})\geq 1$, we have
\begin{equation}\label{f4.7}
\begin{cases}
e_v(\beta)=e_v(y_{2,v})\geq e_v(\mathfrak{N}),\ \ e_v(y_{1,v})=0\\
e_v(\beta^{-1}+ c_v)\geq 0,\ \ e_v(y_{1,v}u_v+y_{2,v}b_v)\geq 0,\ \ e_v(b_v)\geq 0.
\end{cases}
\end{equation}

Now we assume $e_v(\mathfrak{N})=0$, so that $C_v=K_v$.  Consider the following scenarios.
\begin{itemize}
\item Suppose $e_v(\beta)=0$. Then
\begin{equation}\label{equa4.4}
\begin{cases}
e_v(\beta)=e_v(y_{2,v})=e_v(y_{1,v})=0\\
e_v(c_v)\geq 0,\ \ e_v(u_v)\geq 0,\ \ e_v(b_v)\geq 0.
\end{cases}
\end{equation}

\item Suppose $e_v(\beta)\geq 1$. We have the following scenarios.
\begin{itemize}
\item Suppose $l=0$. Then
\begin{equation}\label{equ4.8}
\begin{cases}
e_v(\beta)=e_v(y_{2,v})\geq 1\\
e_v(y_{1,v})=0\\
c_v=-\beta^{-1}+\mathcal{O}_v\\
e_v(u_v)\geq 0\\
e_v(b_v)\geq 0.
\end{cases}
\end{equation}

\item Suppose $l\geq 1$. Then
\begin{equation}\label{eq4.9}
\begin{cases}
e_v(\beta)=2l+e_v(y_{2,v})\geq l\\
l+e_v(y_{2,v})= 0\\
l+e_v(y_{1,v})=0\\
l+e_v(y_{2,v})+e_v(c_v)\geq 0\\
l\geq 1\\
l+e_v(\beta^{-1}+ c_v)= 0\\
l+e_v(y_{1,v}u_v+y_{2,v}b_v)\geq 0\\
l+e_v(b_v)\geq 0
\end{cases}\ \ \Rightarrow\ \ \begin{cases}
e_v(\beta)= l\geq 1\\
e_v(y_{2,v})=e_v(y_{1,v})=-l\\
e_v(c_v)\geq 0\\
l+e_v(y_{1,v}u_v+y_{2,v}b_v)\geq 0\\
e_v(b_v)\geq -l.
\end{cases}
\end{equation}
\end{itemize}
\end{itemize}

\subsubsection{Back to Archimedean places}
Suppose that \eqref{4.1} holds. By \eqref{eq4.4} and \eqref{f4.5} there exists a fractional ideal $\mathfrak{I}$, depending only on $F$, $\pi_{v_0}$, $\chi$ and $\sigma$, such that $\beta\in \mathfrak{N}\mathfrak{I}-\{0\}$. So $N(\beta)\gg 1$. Combining this with $|\beta|_v\ll |z_v|_v\ll 1$ for each $v\mid\infty$, we derive that $|z_v|_v\asymp 1$, $v\mid\infty$. Hence, \eqref{equa4.2} reduces to
\begin{equation}\label{eq4.8}
\begin{cases}
|z_v|_v\asymp 1,\ \ |y_{2,v}|_v\ll 1\ \Rightarrow\ |\beta|_v\ll 1\\
|y_{1,v}u_v+y_{2,v}b_v|_v\ll 1,\ \ |b_v|_v\ll 1\\
|y_{1,v}|_v\asymp 1,\ \ |\beta^{-1}+c_v|_v\ll 1.
\end{cases}
\end{equation}

In particular, under the constraint \eqref{4.1}, we have $\beta\in \mathfrak{N}\mathfrak{I}-\{0\}$ with $|\beta|_v\ll 1$ at all $v\mid\infty$. In particular, there are only $O(1)$ such $\beta$'s.

\subsection{Convergence and Vanishing}
\begin{lemma}\label{lem4.1}
Let notation be as before. Let $\phi$ be a cusp form in $\sigma$. Let $\Re(s_1)\geq 10$ and $\Re(s_2)\geq 20$. Then
\begin{align*}
J_{\RNum{1}}:=\int_{F^{\times}\backslash\mathbb{A}_F^{\times}}\int_{(F\backslash\mathbb{A}_F)^2}\int_{\mathrm{GL}_2(\mathbb{A}_F)}\sum_{\alpha,\beta,\gamma\in F}\Big|f\left(X\right)|a|^{s_1-\frac{1}{2}}\phi(y)|\det y|^{s_2}\Big|d y dbdcd^{\times}a<\infty,
\end{align*}
where $X$ refers to the matrix
\begin{equation}\label{eq4.1}
\begin{pmatrix}
a^{-1}& & \\
& 1& \\
&& 1
\end{pmatrix}\begin{pmatrix}
1& & -b\\
& 1& -c\\
&& 1
\end{pmatrix}\begin{pmatrix}
1&  & \\
& 1& \gamma \\
&& 1
\end{pmatrix}w_2\begin{pmatrix}
1& & \alpha \\
& 1& \beta\\
&& 1
\end{pmatrix}\begin{pmatrix}
y&\\
& 1
\end{pmatrix}.
\end{equation}
\end{lemma}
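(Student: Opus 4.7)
The plan is to show that the support of $f(X)$, together with the boundedness of the cusp form $\phi$ on compacta, forces the integrand to be supported on a compact region indexed by a finite set of $(\alpha,\beta,\gamma)$-values. Since the integrand is taken in absolute value, the phase $\overline\psi(c)$ has modulus one, so I fold the sum over $\gamma\in F$ into the $c$-integral by the usual Poisson-type unfolding, replacing $\int_{F\backslash\mathbb{A}_F}dc\sum_{\gamma\in F}$ by $\int_{\mathbb{A}_F}dc\,(\cdot)|_{\gamma=0}$. This leaves sums over $\alpha$ and $\beta$ and an integral over $y\in\mathrm{GL}_2(\mathbb{A}_F)$, together with the compactly supported integrals in $a,b,c$.

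Next, I carry out the matrix multiplication in \eqref{eq4.1} explicitly and perform an Iwasawa-type factorization of the embedding of $y$ into $\mathrm{GL}_3$ so that $X$ takes the form displayed in \eqref{4.1}, introducing new coordinates $(y_1,y_2,u,k)$ in place of $y$, with an explicit Jacobian. This rewriting is valid when $\beta\ne 0$, which is forced by the support of $f$. At this point I invoke the local analysis from Section 4.1: combining \eqref{equa4.2}, \eqref{eq4.4}, \eqref{f4.5}, \eqref{f4.7}, \eqref{equa4.4}, \eqref{equ4.8}, \eqref{eq4.9}, and \eqref{eq4.8} across all places yields $\beta\in \mathfrak{I}\mathfrak{N}\setminus\{0\}$ for some fixed fractional ideal $\mathfrak{I}$, together with $|\beta|_v\ll 1$ at every $v\mid\infty$; lattice finiteness in a bounded adelic box then gives $\beta$ in a finite set $S_\beta$ whose size depends only on the fixed data. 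The same constraints force the $(1,3)$-entry of $X$, namely $a^{-1}(\alpha-b\beta)$, to lie in a compact adelic box, so for each fixed $(\beta,b,a)$ the variable $\alpha\in F$ runs through a finite set again by lattice counting.

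For each $(\alpha,\beta)$ in the resulting finite set, the remaining variables $(a,b,c,y_1,y_2,u,k)$ are confined to compact sets on which $|f(X)|$, $|a|^{s_1-1/2}$, $|\phi(y)|$ (since cusp forms are bounded), and $|\det y|^{s_2}$ are uniformly bounded; the integral is thus majorized by a finite sum of bounded integrals over compacta, giving the claim. The main technical obstacle is the interplay between the Iwasawa rewriting and the per-place constraints, especially at unramified finite places where $\beta$ is highly divisible (cf.\ \eqref{equ4.8}, \eqref{eq4.9}): the new coordinates must absorb the noncompact directions of $\mathrm{GL}_2(\mathbb{A}_F)$ arising there, and the associated growth in $|\det y|$ must be tamed by the factor $|\det y|^{s_2}$ with $\Re(s_2)\geq 20$, which accounts for the quantitative assumption in the statement. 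A secondary subtlety is verifying that the $\alpha$-count coming from the $(1,3)$-support constraint is uniform in the other variables, so the resulting finite sum is actually bounded independently of $(a,b)$.
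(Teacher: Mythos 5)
Your argument breaks down at the key step where you claim that all the remaining variables, including $a$, are confined to compact sets, so that the mere boundedness of the cusp form $\phi$ and the bounded factors $|a|^{s_1-1/2}$, $|\det y|^{s_2}$ suffice for convergence. This is not the case. After the natural changes of variable (in the paper: $y\mapsto\begin{pmatrix}1&-\alpha\beta^{-1}\\&1\end{pmatrix}y$, $b\mapsto b+\alpha\beta^{-1}$, $c\mapsto c+\gamma$, and crucially $y\mapsto\begin{pmatrix}a\\&1\end{pmatrix}y$), the argument of $f$ no longer depends on $a$ at all, so the support of $f$ puts \emph{no} constraint on $a$; the $a$-integral runs over all of $F^{\times}\backslash\mathbb{A}_F^{\times}$, which is not compact. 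Even in your unmanipulated form, the constraint coming from $\supp f$ relates $|a|$ to $|\det y|$ (since $\det X \sim z^3 a^{-1}\det y$ must be $\asymp 1$), and $y$ ranges over all of $\mathrm{GL}_2(\mathbb{A}_F)$, so $|a|$ can be arbitrarily large provided $|\det y|$ grows correspondingly. Thus the $a$-direction (and the correlated growth of $y$) is genuinely noncompact, and sup-norm bounds on $\phi$ cannot close the argument.

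What is actually needed --- and what the paper uses --- is the \emph{rapid decay} of the cusp form: after the Iwasawa decomposition and scaling by $\beta$, the cusp form appears as
\begin{align*}
\phi\left(\begin{pmatrix}ay_1&\\&y_2\end{pmatrix}\begin{pmatrix}1&u\\&1\end{pmatrix}k\right)\ll \min\{1,|ay_1|^{-15}|y_2|^{15}\},
\end{align*}
and it is precisely this decay in $a$ (with $y_1,y_2,u,b,c$ confined to $\beta$-dependent compact sets by the support of $f$) that tames the factor $|a|^{s_1+\frac12}$ and yields absolute convergence. This is exactly what the authors flag in the remark following the lemma: if $\phi$ were merely bounded (or, say, of moderate growth like an Eisenstein series), $J_{\RNum{1}}$ need not converge. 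So your proposal has a genuine gap: it replaces the essential cuspidality input (rapid decay) with boundedness, and on that basis wrongly asserts compactness of the $a$-integration domain. The rest of your sketch --- unfolding $\gamma$, scoping $\beta\in\mathfrak{N}\mathfrak{I}\setminus\{0\}$ via the place-by-place analysis, and controlling $\alpha$ through the $(1,3)$-entry --- is in the right spirit, but without the decay estimate in $a$ the convergence claim is unsupported.
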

\begin{proof}
Due to the construction of $f_v$ at $v\mid\mathfrak{N}$, we may assume $\beta\in F^{\times}$ in the above inner sum. Taking advantage of
\begin{align*}
\begin{pmatrix}
1& &\alpha\\
& 1& \beta\\
&& 1
\end{pmatrix}\begin{pmatrix}
1& -\delta &\\
& 1& \\
&& 1
\end{pmatrix}=\begin{pmatrix}
1& -\delta &\\
& 1& \\
&& 1
\end{pmatrix}\begin{pmatrix}
1& &\alpha+\delta\beta\\
& 1& \beta\\
&& 1
\end{pmatrix},
\end{align*}
we can preform a series of change of variables $y\mapsto \begin{pmatrix}
1& -\alpha\beta^{-1}\\
& 1
\end{pmatrix}y$, $b\mapsto b+\alpha\beta^{-1}$, $c\mapsto c+\gamma$, and $y\mapsto \begin{pmatrix}
a\\
& 1
\end{pmatrix}y$, in conjunction with the Iwasawa coordinate $y=\begin{pmatrix}
y_1&y_1u\\
& y_2
\end{pmatrix}k$,
to obtain
\begin{align*}
J_{\RNum{1}}=&\int_{F^{\times}\backslash\mathbb{A}_F^{\times}}\int_{(\mathbb{A}_F)^2}\int_{\mathrm{GL}_2(\mathbb{A}_F)}\sum_{\beta\in F^{\times}}\bigg|f\left(\begin{pmatrix}
1& & b\\
& 1& c\\
&& 1
\end{pmatrix}w_2\begin{pmatrix}
y_1&y_1u\\
& y_2& \beta\\
&& 1
\end{pmatrix}\begin{pmatrix}
k\\
&1
\end{pmatrix}\right)\\
&\qquad |a|^{s_1+\frac{1}{2}}\phi\left(\begin{pmatrix}
a&\\
& 1
\end{pmatrix}y\right)|\det y|^{s_2}\bigg|dy dbdcd^{\times}a.
\end{align*} 	

A straightforward calculation leads to
\begin{align*}
&\begin{pmatrix}
1& & b\\
& 1& c\\
&& 1
\end{pmatrix}w_2\begin{pmatrix}
y_1&y_1u\\
& y_2& \beta\\
&& 1
\end{pmatrix}
=\begin{pmatrix}
y_1&y_1u+y_2b & \beta b\\
& y_2c & 1+\beta c\\
& y_2& \beta
\end{pmatrix}.
\end{align*}

Consequently, we obtain, by the change of variable $y_1\mapsto \beta y_1$ and $y_2\mapsto \beta y_2$, that
\begin{align*}
J_{\RNum{1}}=&\sum_{\beta\in F^{\times}}\int_{F^{\times}\backslash\mathbb{A}_F^{\times}}\int_{(\mathbb{A}_F^{\times})^2}\int_{(\mathbb{A}_F)^3}\int_{K'}\bigg|f\left(\begin{pmatrix}
y_1&y_1u+y_2b & b\\
& y_2c & \beta^{-1}+c\\
& y_2& 1
\end{pmatrix}\begin{pmatrix}
k\\
&1
\end{pmatrix}\right)\\
&\qquad |a|^{s_1+\frac{1}{2}}\phi\left(\begin{pmatrix}
ay_1&\\
& y_2
\end{pmatrix}\begin{pmatrix}
1& u\\
& 1
\end{pmatrix}k\right)|y_1y_2|^{s_2}\bigg|dkdu dbdcd^{\times}y_1d^{\times}y_2d^{\times}a.
\end{align*}

Utilizing \eqref{eq4.4}, \eqref{f4.7}, \eqref{equa4.4}, \eqref{equ4.8}, \eqref{eq4.9}, and \eqref{eq4.8} in \textsection\ref{sec4.1}, which characterize the support of the variables $y_1,$ $y_2$, $u$, $b$, $c$ and $a$, in conjunction with the decaying of cusp forms
\begin{align*}
\phi\left(\begin{pmatrix}
ay_1&\\
& y_2
\end{pmatrix}\begin{pmatrix}
1& u\\
& 1
\end{pmatrix}k\right)\ll \min\{1,|ay_1|^{-15}|y_2|^{15}\},
\end{align*}
where the implied constant depends on $\phi$ and $F$,
we obtain $J_{\RNum{1}}<\infty$.
\end{proof}

\begin{remark}
In the above proof the cuspidality of $\phi$ is essentially used. In fact, $J_{\RNum{1}}$ may not converge if $\phi$ is an Eisenstein series. 	
\end{remark}

\begin{cor}\label{cor4.3}
Let notation be as before. Let $N_F(\mathfrak{N})$ be sufficiently large. Let $\phi$ be a cusp form in $\sigma$. Let $\Re(s_1)\geq 10$ and $\Re(s_2)\geq 20$. Then $J_{\Reg}^{\RNum{1}}(f,\phi,\boldsymbol{s})\equiv 0$.
\end{cor}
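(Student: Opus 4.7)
The plan is to combine the absolute convergence furnished by Lemma \ref{lem4.1} with the purely local support bounds already assembled in \textsection\ref{sec4.1} and show that, once $N_F(\mathfrak{N})$ exceeds a constant depending only on the fixed data, \emph{no} representative of the coset $N_{w_2}(F)w_2P_0(F)$ survives in the inner sum of $J_{\Reg}^{\RNum{1}}(f,\phi,\boldsymbol{s})$, so the integrand is pointwise zero. Concretely, I would parametrize the coset by $(\alpha,\beta,\gamma)\in F^3$ so that the argument of $f$ takes the form \eqref{eq4.1}, repeat the chain of substitutions in the proof of Lemma \ref{lem4.1} to absorb $\alpha$ and $\gamma$ into shifts of $b$, $c$ and $y$ and then rescale $y_1,y_2$ by $\beta$; Lemma \ref{lem4.1} allows pulling the resulting sum over $\beta$ outside all integrals, and the Iwahori-type support at $v\mid\mathfrak{N}$ built into $f_v$ in \textsection\ref{sec2.2} (as noted in the opening line of the proof of Lemma \ref{lem4.1}) restricts the sum to $\beta\in F^{\times}$.

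Next I would harvest the constraints on $\beta$ that \textsection\ref{sec4.1} already extracts place-by-place: at $v\in\Sigma_{\chi_{\fin}}\cup\Sigma_{\sigma_{\fin}}\cup\{v_0\}$, \eqref{eq4.4} yields $e_v(\beta)\geq -C_v$ with $C_v$ depending only on fixed data; at finite $v\notin\Sigma_{\chi_{\fin}}\cup\Sigma_{\sigma_{\fin}}\cup\{v_0\}$ with $v\nmid\mathfrak{N}$, \eqref{equa4.4}, \eqref{equ4.8}, \eqref{eq4.9} give $e_v(\beta)\geq 0$; and at $v\mid\mathfrak{N}$, \eqref{f4.7} forces $e_v(\beta)\geq e_v(\mathfrak{N})$. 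Together these exhibit a fractional ideal $\mathfrak{I}$ \emph{independent of} $\mathfrak{N}$ with $\beta\in\mathfrak{N}\mathfrak{I}$, which is precisely the conclusion drawn just before \eqref{eq4.8}. Simultaneously the Archimedean analysis \eqref{eq4.8} delivers $|\beta|_v\ll 1$ at every $v\mid\infty$, with implied constant depending only on the fixed $h_v$'s.

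The final step is the product formula: since $\beta\in F^{\times}\cap\mathfrak{N}\mathfrak{I}$, the fractional ideal $(\beta)\mathfrak{N}^{-1}\mathfrak{I}^{-1}$ is integral, whence
\begin{align*}
N_F(\mathfrak{N})\cdot N_F(\mathfrak{I})\leq |\Nr(\beta)|=\prod_{v\mid\infty}|\beta|_v\ll 1.
\end{align*}
For $N_F(\mathfrak{N})$ larger than $N_F(\mathfrak{I})^{-1}$ times this absolute constant the set of admissible $\beta$ is empty, the inner sum of $J_{\Reg}^{\RNum{1}}(f,\phi,\boldsymbol{s})$ has no terms, and the integral vanishes identically on $\Re(s_1)\geq 10$, $\Re(s_2)\geq 20$. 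The only delicate point — and the main reason all the work lies in \textsection\ref{sec4.1} rather than here — is the \emph{uniformity} of $\mathfrak{I}$ in $\mathfrak{N}$: every finite-place lower bound on $e_v(\beta)$ must be either an absolute constant or exactly $e_v(\mathfrak{N})$, and this is secured because \textsection\ref{sec2.2} fixes $f_v$ at all places away from $\mathfrak{N}$ once and for all. Granted this uniformity, the corollary itself is a one-line product-formula argument.
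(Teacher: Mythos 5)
Your argument is correct and follows the same route as the paper: invoke Lemma \ref{lem4.1} to rearrange the integral and pull the $\beta$-sum outside, use the support analysis of \textsection\ref{sec4.1} to force $\beta\in\mathfrak{N}\mathfrak{I}-\{0\}$ with $\mathfrak{I}$ fixed and $|\beta|_v\ll 1$ at $v\mid\infty$, and conclude by the product formula that no such $\beta$ exists once $N_F(\mathfrak{N})$ is large. The paper compresses the final step into the phrase ``which contradicts $\beta\in\mathfrak{N}\mathfrak{I}-\{0\}$ if $N_F(\mathfrak{N})$ is sufficiently large''; you have simply spelled out that contradiction explicitly.
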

\begin{proof}
By definition,
\begin{align*}
J_{\Reg}^{\RNum{1}}(f,\phi,\boldsymbol{s})=&\int_{F^{\times}\backslash\mathbb{A}_F^{\times}}\int_{(F\backslash\mathbb{A}_F)^2}\int_{\mathrm{GL}_2(\mathbb{A}_F)}\sum_{\alpha,\beta,\gamma\in F}f\left(X\right)\\
&\chi(a)\overline{\psi}(c)|a|^{s_1-\frac{1}{2}}\phi(y)|\det y|^{s_2}d y dbdcd^{\times}a,
\end{align*}
where $X$ is defined by \eqref{eq4.1}. By Lemma \ref{lem4.1}, $J_{\Reg}^{\RNum{1}}(f,\phi,\boldsymbol{s})$ is equal to
\begin{align*}
&\sum_{\beta\in F^{\times}}\int_{F^{\times}\backslash\mathbb{A}_F^{\times}}\int_{(\mathbb{A}_F^{\times})^2}\int_{(\mathbb{A}_F)^3}\int_{K'}f\left(\begin{pmatrix}
y_1&y_1u+y_2b & b\\
& y_2c & \beta^{-1}+c\\
& y_2& 1
\end{pmatrix}\begin{pmatrix}
k\\
&1
\end{pmatrix}\right)\\
&\chi(a)\overline{\psi}(c)|a|^{s_1+\frac{1}{2}}\phi\left(\begin{pmatrix}
ay_1&\\
& y_2
\end{pmatrix}\begin{pmatrix}
1& u\\
& 1
\end{pmatrix}k\right)|y_1y_2|^{s_2}dkdu dbdcd^{\times}y_1d^{\times}y_2d^{\times}a,
\end{align*}
which converges absolutely by the control theorem.

By the analysis in \textsection\ref{sec4.1}, there exists a fractional ideal $\mathfrak{I}$, depending only on $F$, $\pi_{v_0}$, $\chi$ and $\sigma$, such that $\beta\in \mathfrak{N}\mathfrak{I}-\{0\}$. Also, by \eqref{eq4.8} we have $|\beta|_v\ll 1$ for all $v\mid\infty$, which contradicts $\beta\in \mathfrak{N}\mathfrak{I}-\{0\}$ if $N_F(\mathfrak{N})$ is sufficiently large. Hence, $J_{\Reg}^{\RNum{1}}(f,\phi,\boldsymbol{s})=0$ in $\Re(s_1)\geq 10$ and $\Re(s_2)\geq 20$.
\end{proof}

\section{Regular Orbital Integrals of Type \RNum{2}}
Let $J_{\Reg}^{\RNum{2}}(f,\phi,\boldsymbol{s})$ be the orbital integral defined in \textsection\ref{sec2.5}. In this section, we aim to show that $J_{\Reg}^{\RNum{2}}(f,\phi,\boldsymbol{s})$ converges absolutely in $\Re(s_1)\gg 1$ and $\Re(s_2)\gg 1$, and moreover,  that it vanishes in this region.

\subsection{Local Analysis}
Let $f$ be the test function constructed in \textsection\ref{sec2.2}.
Let $C=\otimes_{v\leq\infty}C_v$, be as in \textsection\ref{sec4.1}. We have
\begin{align*}
f\left(\begin{pmatrix}
a^{-1}y_1&a^{-1}(y_1u+y_2b) & a^{-1}(\beta^{-1}+b)\\
y_1 & y_1u+y_2c & c\\
& y_2& 1
\end{pmatrix}\begin{pmatrix}
k\\
& 1	
\end{pmatrix}\right)\equiv 0
\end{align*}
unless
\begin{equation}\label{eq5.1}
z_v\begin{pmatrix}
a_v^{-1}y_{1,v}&a_v^{-1}(y_{1,v}u_v+y_{2,v}b_v) & a_v^{-1}(\beta^{-1}+ b_v)\\
y_{1,v} & y_{1,v}u_v+y_{2,v}c_v &  c_v\\
& y_{2,v}& 1
\end{pmatrix}\in C_v\begin{pmatrix}
K_v'\\
& 1
\end{pmatrix}.
\end{equation}

\subsubsection{Archimedean places}
It follows from \eqref{eq5.1} that
\begin{align*}
\begin{cases}
|z_v^3a_v^{-1}\beta^{-1}y_{1,v}y_{2,v}|_v\asymp 1\\
|z_va_v^{-1}y_{1,v}|_v\ll  1,\ \ |z_vy_{1,v}|_v\ll  1\\
|z_va_v^{-1}(y_{1,v}u_v+y_{2,v}b_v)|_v\ll 1,\ \ |z_va_v^{-1}(\beta^{-1}+ b_v)|_v\ll 1\\
|z_vy_{2,v}|_v\ll 1,\ \ |z_vc_v|_v\ll 1,\ \ |z_v|_v\ll 1,\ \ |z_v(y_{1,v}u_v+y_{2,v}c_v)|_v\ll 1,
\end{cases}
\end{align*}
where the implied constant depends only on $h_v\in C_c^{\infty}(\mathrm{GL}_3(F_v))$ (cf. \eqref{eq2.6}). So
\begin{equation}\label{eq5.2}
\begin{cases}
|z_v^3a_v^{-1}\beta^{-1}y_{1,v}y_{2,v}|_v\asymp 1,\ \ |\beta|_v\ll |z_v|_v\ll 1\\
|z_va_v^{-1}y_{1,v}|_v\ll  1,\ \ |z_vy_{1,v}|_v\ll  1\\
|z_va_v^{-1}(y_{1,v}u_v+y_{2,v}b_v)|_v\ll 1,\ \ |z_va_v^{-1}(\beta^{-1}+ b_v)|_v\ll 1\\
|z_vy_{2,v}|_v\ll 1,\ \ |z_vc_v|_v\ll 1,\ \ |z_v(y_{1,v}u_v+y_{2,v}c_v)|_v\ll 1.
\end{cases}
\end{equation}

\subsubsection{Non-Archimedean places: ramification}
Let $v\in \Sigma_{\chi_{\fin}}\bigcup \Sigma_{\sigma_{\fin}}$ or $v\mid v_0$. Write $l=e_v(z_v)$. The constraint \eqref{eq5.1} amounts to
\begin{align*}
\varpi_v^l\begin{pmatrix}
a_v^{-1}y_{1,v}&a_v^{-1}(y_{1,v}u_v+y_{2,v}b_v) & a_v^{-1}(\beta^{-1}+ b_v)\\
y_{1,v} & y_{1,v}u_v+y_{2,v}c_v &  c_v\\
& y_{2,v}& 1
\end{pmatrix}\in C_v,
\end{align*}
which leads to
\begin{equation}\label{eq5.3}
\begin{cases}
|3l-e_v(\beta)-e_v(a_v)+e_v(y_{1,v})+e_v(y_{2,v})|\ll 1\\
l+e_v(y_{2,v})\gg -1,\ \ l\gg -1,\ \ l+e_v(c_v)\gg -1\\
l-e_v(a_v)+e_v(y_{1,v})\gg -1,\ \ l+e_v(y_{1,v})\gg -1\\
l-e_v(a_v)+e_v(y_{1,v}u_v+y_{2,v}b_v)\gg -1\\
l+e_v(y_{1,v}u_v+y_{2,v}c_v)\gg -1\\
l-e_v(a_v)+e_v(\beta ^{-1}+b_v)\gg -1,
\end{cases}
\end{equation}
where the implied constants depend on $\chi$, $\pi_{v_0}$, $\sigma$, and $F$. It follows from \eqref{eq5.3} that
\begin{equation}\label{eq5.4}
\begin{cases}
e_v(\beta)\gg -1,\ \ -1\ll l\ll 1+|e_v(\beta)|\\
|3l-e_v(\beta)-e_v(a_v)+e_v(y_{1,v})+e_v(y_{2,v})|\ll 1 \\
e_v(a_v)\gg l-e_v(\beta),\ \ l+e_v(c_v)\gg -1 \\
-1\ll l+e_v(y_{2,v})\ll 1+|e_v(\beta)|\\
-1\ll l-e_v(a_v)+e_v(y_{1,v})\ll 1+|e_v(\beta)|\\
-1\ll l+e_v(y_{1,v})\ll 1+|e_v(\beta)|\\
l-e_v(a_v)+e_v(y_{1,v}u_v+y_{2,v}b_v)\gg -1\\
l+e_v(y_{1,v}u_v+y_{2,v}c_v)\gg -1\\
l-e_v(a_v)+e_v(\beta ^{-1}+b_v)\gg -1.
\end{cases}
\end{equation}

In particular, for each $\beta$, the constraints \eqref{eq5.4} forces the variables $y_{2,v}$, $u_v$, $c_v$ and $b_v$ lie in compact sets depending only on $\chi$, $\pi_{v_0}$, $\sigma$, $F$, and $\beta$. Moreover, $y_{1,v}$ and $a_v$ satisfy the restrictions
\begin{equation}\label{f5.5}
e_v(a_v)\gg -1-|e_v(\beta)|,\ \ \  e_v(y_{1,v})\gg -1-|e_v(\beta)|.
\end{equation}

\subsubsection{Non-Archimedean places: unramified places}\label{sec5.1.3}
Let $v<\infty$ with $v\not\in \Sigma_{\chi_{\fin}}\bigcup \Sigma_{\sigma_{\fin}}$ and $v\nmid v_0$. Write $l=e_v(z_v)$. We have $C_v=K_v[e_v(\mathfrak{N})]$, and \eqref{eq5.1} reduces to
\begin{equation}\label{eq5.6}
\begin{cases}
e_v(\beta)=3l-e_v(a_v)+e_v(y_{1,v})+e_v(y_{2,v})\\
l+e_v(y_{2,v})\geq e_v(\mathfrak{N})\\
l\geq 0\\
l+e_v(c_v)\geq 0\\
l-e_v(a_v)+e_v(y_{1,v})\geq 0\\
l+e_v(y_{1,v})\geq 0\\
l-e_v(a_v)+e_v(y_{1,v}u_v+y_{2,v}b_v)\geq 0\\
l+e_v(y_{1,v}u_v+y_{2,v}c_v)\geq 0\\
l-e_v(a_v)+e_v(\beta ^{-1}+b_v)\geq 0.
\end{cases}
\end{equation}

When $e_v(\mathfrak{N})\geq 1$, \eqref{eq5.6} simplifies to
\begin{equation}\label{eq5.7}
\begin{cases}
e_v(\beta)=e_v(y_{2,v})\geq e_v(\mathfrak{N})\geq 1\\
e_v(c_v)\geq 0,\ \
e_v(a_v)=e_v(y_{1,v})\geq e_v(\mathfrak{N})\\
-e_v(a_v)+e_v(y_{1,v}u_v+y_{2,v}b_v)\geq 0\\
e_v(\beta ^{-1}+b_v)\geq e_v(\mathfrak{N}).
\end{cases}
\end{equation}

Now we assume $e_v(\mathfrak{N})=0$, so that $C_v=K_v$. Consider the following scenarios.
\begin{itemize}
\item Suppose $e_v(\beta)=0$. Then
\begin{equation}\label{eq5.8}
\begin{cases}
e_v(a_v)=e_v(y_{1,v})\geq 0\,\ \ e_v(y_{2,v})=0,\ \ e_v(c_v)\geq 0\\
e_v(y_{1,v}u_v+y_{2,v}b_v)\geq e_v(a_v),\ \ e_v(\beta ^{-1}+b_v)\geq e_v(a_v).
\end{cases}
\end{equation}

\item Suppose $e_v(\beta)\geq 1$. Then
\begin{equation}\label{eq5.9}
\begin{cases}
e_v(\beta)=3l-e_v(a_v)+e_v(y_{1,v})+e_v(y_{2,v})\geq 1\\
l+e_v(y_{2,v})\geq 0,\ \ l\geq 0\\
l-e_v(a_v)+e_v(y_{1,v})\geq 0\,\ \ e_v(c_v)\geq -l\\
l+e_v(y_{1,v})\geq 0,\ \ l-e_v(a_v)+e_v(y_{1,v}u_v+y_{2,v}b_v)\geq 0\\
l+e_v(y_{1,v}u_v+y_{2,v}c_v)\geq 0,\ \ l-e_v(a_v)+e_v(\beta ^{-1}+b_v)\geq 0.
\end{cases}
\end{equation}

Consider the following scenarios according to the sign of $e_v(a_v)$.
\begin{itemize}
\item Suppose $e_v(a_v)\geq 0$. Then \eqref{eq5.9} reduces to
\begin{align*}
\begin{cases}
2l+e_v(y_{2,v})=e_v(\beta)\geq 1\\
e_v(a_v)\geq 0,\ \ l\geq 0,\ \ l+e_v(y_{2,v})\geq 0\\
l+e_v(c_v-b_v-\beta^{-1})\geq 0\\
l-e_v(a_v)+e_v(y_{1,v})=0\\
l-e_v(a_v)+e_v(y_{1,v}u_v+y_{2,v}b_v)\geq 0\\
l+e_v(y_{2,v})+e_v(c_v-b_v)\geq 0\\
l-e_v(a_v)+e_v(\beta^{-1}+b_v)\geq 0.
\end{cases}
\end{align*}

\begin{itemize}
\item Suppose $l=0$. Then
\begin{equation}\label{eq5.10}
\begin{cases}
e_v(y_{2,v})=e_v(\beta)\geq 1\\
e_v(y_{1,v})=e_v(a_v)\geq 0\\
e_v(c_v)\geq 0\\
e_v(y_{1,v}u_v+y_{2,v}b_v)\geq e_v(a_v)\\
e_v(y_{2,v})+e_v(c_v-b_v)\geq 0\\
e_v(\beta^{-1}+b_v)\geq e_v(a_v)
\end{cases}\ \ \Rightarrow\ \ \begin{cases}
e_v(y_{2,v})=e_v(\beta)\geq 1\\
e_v(y_{1,v})=e_v(a_v)\geq 0\\
e_v(c_v)\geq 0\\
e_v(y_{1,v}u_v+y_{2,v}b_v)\geq e_v(a_v)\\
e_v(\beta^{-1}+b_v)\geq e_v(a_v).
\end{cases}
\end{equation}

\item Suppose $l\geq 1$. Then
\begin{equation}\label{eq5.11}
\begin{cases}
e_v(\beta)=l\geq 1,\ \ e_v(y_{2,v})=-e_v(\beta)\\
e_v(a_v)\geq 0,\ \ e_v(y_{1,v})=e_v(a_v)-e_v(\beta)\\
e_v(y_{1,v}u_v+y_{2,v}b_v)\geq e_v(a_v)-e_v(\beta)\\
e_v(c_v-b_v)\geq 0,\ \ e_v(\beta^{-1}+b_v)\geq e_v(a_v)-e_v(\beta).
\end{cases}
\end{equation}
\end{itemize}

\item Suppose $e_v(a_v)\leq -1$. Then \eqref{eq5.9} reduces to
\begin{equation}\label{eq5.12}
\begin{cases}
2l-e_v(a_v)+e_v(y_{2,v})=e_v(\beta)\\
l+e_v(y_{2,v})\geq 0,\ \ l\geq 0\\
l+e_v(c_v)\geq 0,\ \ l+e_v(y_{1,v})= 0\\
l-e_v(a_v)+e_v(y_{2,v})+e_v(b_v-c_v)\geq 0\\
l+e_v(y_{1,v}u_v+y_{2,v}c_v)\geq 0\\
l-e_v(a_v)+e_v(\beta^{-1}+ b_v-c_v)\geq 0.
\end{cases}
\end{equation}

\begin{itemize}
\item Suppose $l=0$. Then \eqref{eq5.12} becomes
\begin{equation}\label{eq5.13}
\begin{cases}
e_v(\beta)=-e_v(a_v)+e_v(y_{2,v})\geq 1,\ \ e_v(y_{2,v})\geq 0\ \Rightarrow\ e_v(a_v)\geq -e_v(\beta)\\
e_v(c_v)\geq 0,\ \ e_v(y_{1,v})= 0\\
e_v(b_v)\geq -e_v(\beta),\ \ e_v(u_v)\geq 0\\
-e_v(a_v)+e_v(\beta^{-1}+ b_v)\geq 0
\end{cases}.
\end{equation}

\item Suppose $l\geq 1$. Then \eqref{eq5.12} becomes
\begin{equation}\label{eq5.14}
\begin{cases}
e_v(\beta)\geq 1,\ \ e_v(y_{1,v})=e_v(y_{2,v})=-e_v(a_v)-e_v(\beta)\\
e_v(a_v)\geq 1-e_v(\beta),\ \ e_v(c_v)\geq -e_v(a_v)-e_v(\beta)\\
-e_v(a_v)+e_v(b_v-c_v)\geq 0,\ \ e_v(y_{1,v}u_v+y_{2,v}c_v)\geq -e_v(a_v)-e_v(\beta).
\end{cases}
\end{equation}
\end{itemize}
\end{itemize}
\end{itemize}

\subsubsection{Back to Archimedean places}\label{sec5.1.4}
Suppose that \eqref{eq5.1} holds. By \eqref{eq5.7}, \eqref{eq5.8} and \eqref{eq5.9} there exists a fractional ideal $\mathfrak{J}$, depending only on $F$, $\pi_{v_0}$, $\chi$ and $\sigma$, such that $\beta\in \mathfrak{N}\mathfrak{J}-\{0\}$. So $N(\beta)\gg 1$. Combining this with $|\beta|_v\ll |z_v|_v\ll 1$ for each $v\mid\infty$, we derive that $|z_v|_v\asymp 1$, $v\mid\infty$. Hence, \eqref{eq5.2} reduces to
\begin{equation}\label{f5.15}
\begin{cases}
|a_v^{-1}\beta^{-1}y_{1,v}y_{2,v}|_v\asymp 1,\ \ |y_{1,v}|_v\ll  1,\ \ |y_{2,v}|_v\ll 1\ \Rightarrow\ |a_v|_v\ll |\beta|_v\\
|\beta|_v\ll 1,\ \ |a_v^{-1}y_{1,v}|_v\ll  1\\
|a_v^{-1}(y_{1,v}u_v+y_{2,v}b_v)|_v\ll 1,\ \ |a_v^{-1}(\beta^{-1}+ b_v)|_v\ll 1\\
|c_v|_v\ll 1,\ \ |y_{1,v}u_v+y_{2,v}c_v|_v\ll 1.
\end{cases}
\end{equation}

In particular, under the constraint \eqref{eq5.1}, we have $\beta\in \mathfrak{N}\mathfrak{J}-\{0\}$ with $|\beta|_v\ll 1$ at all $v\mid\infty$. In particular, there are only $O(1)$ such $\beta$'s.

\subsection{Convergence and Vanishing}

\begin{lemma}\label{lem5.1}
Let notation be as before. Let $\phi$ be a cusp form in $\sigma$. Let $\Re(s_1)\geq 10$ and $\Re(s_2)\geq \Re(s_1)+10$. Then
\begin{align*}
J_{\RNum{2}}:=\int_{F^{\times}\backslash\mathbb{A}_F^{\times}}\int_{(F\backslash\mathbb{A}_F)^2}\int_{\mathrm{GL}_2(\mathbb{A}_F)}\sum_{\alpha,\beta,\gamma,\delta\in F}\Big|f\left(Y\right)|a|^{s_1-\frac{1}{2}}\phi(y)|\det y|^{s_2}\Big|d y dbdcd^{\times}a
\end{align*}
converges absolutely, where $Y$ refers to the matrix
\begin{equation}\label{eq5.15}
\begin{pmatrix}
a^{-1}& & \\
& 1& \\
&& 1
\end{pmatrix}\begin{pmatrix}
1&& b\\
& 1& c\\
&& 1
\end{pmatrix}\begin{pmatrix}
1&\gamma & \delta\\
& 1& \\
&& 1
\end{pmatrix}w_1w_2\begin{pmatrix}
1& & \alpha \\
& 1& \beta\\
&& 1
\end{pmatrix}\begin{pmatrix}
y&\\
& 1
\end{pmatrix}.
\end{equation}
\end{lemma}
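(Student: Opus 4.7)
The plan is to closely follow the proof of Lemma \ref{lem4.1}: reduce the fourfold sum over $\alpha,\beta,\gamma,\delta\in F$ to a single sum over $\beta\in F^{\times}$ via commutations and changes of variable, bring the integrand into the standard form analysed in the \emph{Local Analysis} subsection, and then estimate the resulting integral using those support constraints together with the rapid decay of the cusp form $\phi$.

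First I would check that only $\beta\in F^{\times}$ contributes: the analysis of \textsection\ref{sec5.1.3} at $v\mid\mathfrak{N}$ forces $e_v(\beta)\geq e_v(\mathfrak{N})\geq 1$, so the $\beta=0$ summand vanishes identically. Next I would absorb $\delta$, $\gamma$, and $\alpha$ in succession via translations of $b$, $c$, and appropriate left multiplications of $y$ by unipotent matrices in $\mathrm{GL}_2(F)$ (which leave $\phi$ invariant). The relevant commutation identities generalize
\begin{align*}
\begin{pmatrix}1& & \alpha\\ & 1& \beta\\ & & 1\end{pmatrix}\begin{pmatrix}1& -\delta_1& \\ & 1& \\ & & 1\end{pmatrix}=\begin{pmatrix}1& -\delta_1& \\ & 1& \\ & & 1\end{pmatrix}\begin{pmatrix}1& & \alpha+\delta_1\beta\\ & 1& \beta\\ & & 1\end{pmatrix},
\end{align*}
and exploit the transitivity of translation by $\delta_1\beta$ on $F$ (legitimate since $\beta\neq 0$). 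After these changes, the Iwasawa substitution $y=\begin{pmatrix}y_1& y_1u\\ & y_2\end{pmatrix}k$, the rescaling $y_1\mapsto \beta y_1$, $y_2\mapsto \beta y_2$, and the shift $y\mapsto \mathrm{diag}(a,1)y$, the argument of $f$ should reduce to the matrix
\begin{align*}
\begin{pmatrix}a^{-1}y_1& a^{-1}(y_1u+y_2b)& a^{-1}(\beta^{-1}+b)\\ y_1& y_1u+y_2c& c\\ & y_2& 1\end{pmatrix}\begin{pmatrix}k& \\ & 1\end{pmatrix}
\end{align*}
appearing in \eqref{eq5.1}.

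At this stage the local analysis preceding this lemma applies verbatim. By \textsection\ref{sec5.1.4}, only finitely many $\beta\in \mathfrak{N}\mathfrak{J}-\{0\}$ contribute, and the variables $y_2,u,b,c,k$ lie in compact adelic sets depending on $\beta$. The remaining $y_1$ and $a$ integrations are constrained by \eqref{f5.5} at ramified places and by the case-by-case scenarios \eqref{eq5.8}--\eqref{eq5.14} at unramified places. Combining these constraints with the rapid decay bound
\begin{align*}
\left|\phi\left(\begin{pmatrix}ay_1& \\ & y_2\end{pmatrix}\begin{pmatrix}1& u\\ & 1\end{pmatrix}k\right)\right|\ll_N \min\{1, |ay_1/y_2|^{-N}\}
\end{align*}
for arbitrarily large $N$, the remaining integral should be seen to converge.

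The main technical obstacle will be verifying this last point. In contrast with Type I, the unramified scenarios \eqref{eq5.13}--\eqref{eq5.14} allow $e_v(a_v)$ to be arbitrarily negative, compensated by increasingly negative valuations of $y_{1,v}$ and $y_{2,v}$. Controlling the coupled divergence between $|a|^{s_1-\frac{1}{2}}$ and $|y_1y_2|^{s_2}$ over these nested regions is where the asymmetric assumption $\Re(s_2)\geq \Re(s_1)+10$ becomes essential: the surplus of $|y_2|^{\Re(s_2)-\Re(s_1)-1/2}$ beyond what is needed to balance $|a|^{s_1-1/2}$ should provide, together with a large choice of $N$, the decay margin required to sum over the scenarios \eqref{eq5.13}--\eqref{eq5.14} across all unramified primes and integrate freely in the archimedean variables.
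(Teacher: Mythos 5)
Your reduction of the fourfold sum to a single $\beta$-sum is in the right spirit, but there is a genuine structural error in the middle: the shift $y\mapsto\operatorname{diag}(a,1)y$ is a feature of the Type~\RNum{1} cell, not the Type~\RNum{2} cell, and it is not performed here. In Type~\RNum{1}, after conjugating through $w_2$ the factor $\operatorname{diag}(a^{-1},1,1)$ lands in the upper-left $2\times 2$ block and can be absorbed into $y$; in Type~\RNum{2}, conjugating through $w_1w_2$ sends $\operatorname{diag}(a^{-1},1,1)$ to $\operatorname{diag}(1,1,a^{-1})$, which cannot be absorbed into $\operatorname{diag}(y,1)$. This is why the paper's expression for $J_{\RNum{2}}(\beta)$ retains $a^{-1}$ in the first row of the argument of $f$ \emph{and} has $\phi(y)$ with $y=\operatorname{diag}(y_1,y_2)n(u)k$ containing no $a$. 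You cannot simultaneously have the displayed matrix for $f$ (with $a^{-1}$ in the top row) and $\phi\bigl(\operatorname{diag}(ay_1,y_2)n(u)k\bigr)$ — these are incompatible reductions.

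This error matters because it drives your whole convergence strategy in the wrong direction. Since $a$ does not enter $\phi$, the rapid decay of $\phi$ as $|ay_1/y_2|\to\infty$ is unavailable (and unnecessary), and the paper instead bounds $|\phi|$ simply by its sup-norm $\|\phi\|_\infty$ (see the remark following Lemma~\ref{lem5.1}, which explicitly contrasts with the cuspidality-via-decay argument used in Lemma~\ref{lem4.1}). Convergence in the $a$-integral is then extracted from the local support constraints \eqref{eq5.4}, \eqref{f5.5}, \eqref{eq5.8}--\eqref{eq5.14}, \eqref{f5.15} on the argument of $f$ together with the explicit factor $|a|^{\Re(s_1)-1/2}|\det y|^{\Re(s_2)}$; at unramified places with $e_v(\beta)=0$ this yields the convergent local factor $\zeta_v(1/2+\Re(s_1)+\Re(s_2))$. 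The asymmetric hypothesis $\Re(s_2)\geq\Re(s_1)+10$ is indeed used in verifying $J_{\RNum{2},v}(\beta)\ll 1$ at the finitely many bad places, but through these support-constraint estimates rather than through any interplay with a decay rate of $\phi$. Finally, your description of the elimination of $\gamma$ ("translations of $b$, $c$, and appropriate left multiplications of $y$ by unipotent matrices") misses the actual mechanism: the paper writes $\bigl(\begin{smallmatrix}1&\gamma\\&1\end{smallmatrix}\bigr)=\bigl(\begin{smallmatrix}\gamma&\\&1\end{smallmatrix}\bigr)\bigl(\begin{smallmatrix}1&1\\&1\end{smallmatrix}\bigr)\bigl(\begin{smallmatrix}\gamma^{-1}&\\&1\end{smallmatrix}\bigr)$ and then rescales $a\mapsto\gamma a$; the unipotent part of the big cell is not removed by translating $b$ or $c$.
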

\begin{proof}
By the definition of $f$ in \textsection\ref{sec2.2}, we have $f(Y)\equiv 0$ unless $\beta, \gamma\in F^{\times}$. For $\beta\neq 0$, we have
\begin{align*}
w_1w_2\begin{pmatrix}
1& & \alpha\\
& 1& \beta\\
&& 1
\end{pmatrix}\begin{pmatrix}
1& \alpha\beta^{-1} &\\
& 1& \\
&& 1
\end{pmatrix}=\begin{pmatrix}
1& \\
& 1& \alpha\beta^{-1}\\
&& 1
\end{pmatrix}w_1w_2\begin{pmatrix}
1& & \\
& 1& \beta\\
&& 1
\end{pmatrix}.
\end{align*}

Therefore, by a change of variable, we obtain
\begin{align*}
J_{\RNum{2}}=\int_{F^{\times}\backslash\mathbb{A}_F^{\times}}\int_{(\mathbb{A}_F)^2}\int_{\mathrm{GL}_2(\mathbb{A}_F)}\sum_{\beta,\gamma\in F^{\times}}\Big|f\left(Y'\right)|a|^{s_1-\frac{1}{2}}\phi(y)|\det y|^{s_2}\Big|d y dbdcd^{\times}a,
\end{align*}
where
\begin{align*}
Y':=\begin{pmatrix}
a^{-1}& & \\
& 1& \\
&& 1
\end{pmatrix}\begin{pmatrix}
1&\gamma & b\\
& 1& c\\
&& 1
\end{pmatrix}w_1w_2\begin{pmatrix}
1& & \\
& 1& \beta\\
&& 1
\end{pmatrix}\begin{pmatrix}
y&\\
& 1
\end{pmatrix}.
\end{align*}

Utilizing the algebraic relation $\begin{pmatrix}
1& \gamma\\
& 1
\end{pmatrix}=\begin{pmatrix}
\gamma\\
& 1
\end{pmatrix}\begin{pmatrix}
1& 1\\
& 1
\end{pmatrix}\begin{pmatrix}
\gamma^{-1}\\
& 1
\end{pmatrix}$ we may perform the change of variable $a\mapsto \gamma a$ to write $J_{\RNum{2}}$ as
\begin{align*}
J_{\RNum{2}}=\int_{F^{\times}\backslash\mathbb{A}_F^{\times}}\int_{(\mathbb{A}_F)^2}\int_{\mathrm{GL}_2(\mathbb{A}_F)}\sum_{\beta\in F^{\times}}\Big|f\left(Y''\right)|a|^{s_1-\frac{1}{2}}\phi(y)|\det y|^{s_2}\Big|d y dbdcd^{\times}a,
\end{align*}
where
\begin{align*}
Y'':=\begin{pmatrix}
a^{-1}& & \\
& 1& \\
&& 1
\end{pmatrix}\begin{pmatrix}
1&1 & b\\
& 1& c\\
&& 1
\end{pmatrix}w_1w_2\begin{pmatrix}
1& & \\
& 1& \beta\\
&& 1
\end{pmatrix}\begin{pmatrix}
y&\\
& 1
\end{pmatrix}.
\end{align*}

Write $y=\begin{pmatrix}
y_1\\
& y_2
\end{pmatrix}\begin{pmatrix}
1& u\\
& 1
\end{pmatrix}k$ in the Iwasawa coordinates. Performing the change of variables $y_1\mapsto \beta y_1$ and $y_2\mapsto \beta y_2$, we derive that
\begin{align*}
J_{\RNum{2}}=\sum_{\beta \in F^{\times}}J_{\RNum{2}}(\beta),
\end{align*}
where $J_{\RNum{2}}(\beta)$ is defined by
\begin{multline*}
	\int_{(\mathbb{A}_F^{\times})^3}\int_{(\mathbb{A}_F)^3}\int_{K}\Bigg|f\left(\begin{pmatrix}
a^{-1}y_1&a^{-1}(y_1u+y_2b) & a^{-1}(\beta^{-1}+b)\\
y_1 & y_1u+y_2c & c\\
& y_2& 1
\end{pmatrix}\begin{pmatrix}
k\\
& 1	
\end{pmatrix}\right)\Bigg|\\
\times |a|^{\Re(s_1)-\frac{1}{2}}|\phi(y)||\det y|^{\Re(s_2)}d y dbdcd^{\times}a.
\end{multline*}

Recall the discussion in \textsection\ref{sec5.1.4}, $J_{\RNum{2}}(\beta)\equiv 0$ unless $\beta\in \mathfrak{N}\mathfrak{J}-\{0\}$ and $|\beta|_v\ll 1$ at all $v\mid\infty$. By Dirichlet's unit theorem, the number of such $\beta$'s is finite:
\begin{equation}\label{f5.17}
\sum_{\substack{\beta\in \mathfrak{N}\mathfrak{J}-\{0\}\\ |\beta|_v\ll 1\ v\mid\infty}}1\ll 1.
\end{equation}

Let $\|\phi\|_{\infty}$ be the sup-norm of $\phi$. Since $\phi$ is a cusp form, $\|\phi\|_{\infty}<\infty$. Therefore,
\begin{align*}
J_{\RNum{2}}(\beta)\leq \|\phi\|_{\infty}\prod_{v\leq\infty}J_{\RNum{2},v}(\beta),
\end{align*}
where $J_{\RNum{2},v}(\beta)$ is defined by
\begin{multline*}
	\int_{(F_v^{\times})^3}\!\!\int_{(F)^3}\!\!\int_{K_v}\!\!\Bigg|f_v\!\!\left(\!\!\begin{pmatrix}
a_v^{-1}y_{1,v}&a_v^{-1}(y_{1,v}u_v+y_{2,v}b_v) & a_v^{-1}(\beta^{-1}+b_v)\\
y_{1,v} & y_{1,v}u_v+y_{2,v}c_v & c_v\\
& y_{2,v}& 1
\end{pmatrix}\!\!\begin{pmatrix}
k_v\\
& 1	
\end{pmatrix}\!\!\right)\!\!\Bigg|\\
\times |a_v|_v^{\Re(s_1)-\frac{1}{2}}|\det y_v|_v^{\Re(s_2)}d y_v db_vdc_vd^{\times}a_v.
\end{multline*}

Let $\beta\in \mathfrak{N}\mathfrak{J}-\{0\}$ with $|\beta|_v\ll 1$ at all $v\mid\infty$. Then there exists a finite set $S$ of non-Archimedean places such that $e_v(\beta)=0$ for $v<\infty$ and $v\not\in S$. The set $S$ depends at most on $F$, $h_v$'s, $\pi_{v_0}$, $\chi$ and $\phi$.

\begin{itemize}
\item Let $v<\infty$ with $v\not\in \Sigma_{\chi_{\fin}}\bigcup \Sigma_{\sigma_{\fin}}\bigcup S$ and $v\nmid v_0$. From the discussion in \textsection\ref{sec5.1.3} and the assumption that $v\not\in S$, we have   $e_v(\beta)=0$, and the variables $y_{1,v}$, $y_{2,v}$, $u_v$, $b_v$, and $c_v$ satisfy the constraints:
\begin{align*}
\begin{cases}
e_v(a_v)=e_v(y_{1,v})\geq 0\,\ \ e_v(y_{2,v})=0,\ \ e_v(c_v)\geq 0\\
e_v(y_{1,v}u_v+y_{2,v}b_v)\geq e_v(a_v),\ \ e_v(\beta ^{-1}+b_v)\geq e_v(a_v).
\end{cases}\tag{\ref{eq5.8}}
\end{align*}

As a consequence, we obtain

\begin{multline*}
	J_{\RNum{2},v}(\beta)=\int_{\mathcal{O}_v}dc_v\int_{\mathcal{O}_v-\{0\}}|a_v|_v^{\Re(s_1)+\Re(s_2)-\frac{1}{2}}\int_{\mathcal{O}_v^{\times}}\int_{a_v\mathcal{O}_v^{\times}}\int_{e_v(\beta ^{-1}+b_v)\geq e_v(a_v)}\\
\int_{e_v(y_{1,v}u_v+y_{2,v}b_v)\geq e_v(a_v)}du_vdb_vd^{\times}y_{1,v}d^{\times}y_{2,v}d^{\times}a_v.
\end{multline*}
With a straightforward we deduce
\begin{equation}\label{eq5.17}
J_{\RNum{2},v}(\beta)=\Vol(\mathcal{O}_v^{\times})^3\Vol(\mathcal{O}_v)^3\zeta_v(1/2+\Re(s_1)+\Re(s_2)).
\end{equation}

\item Let $v\in \Sigma_{\chi_{\fin}}\bigcup \Sigma_{\sigma_{\fin}}\bigcup S$ or $v\mid v_0\mathfrak{N}$. According to \eqref{eq5.4}, \eqref{f5.5}, \eqref{eq5.7}, \eqref{eq5.8},  \eqref{eq5.9}, \eqref{eq5.10}, \eqref{eq5.11}, \eqref{eq5.13}, and \eqref{eq5.14}, along with the assumptions $\Re(s_1)\geq 10$ and $\Re(s_2)\geq \Re(s_1)+10$, we derive that
\begin{equation}\label{eq5.18}
J_{\RNum{2},v}(\beta)\ll 1,
\end{equation}
where the implied constant depends on $s_1$, $s_2$, $F$,  $\pi_{v_0}$, $\chi$ and $\phi$.

\item Let $v\mid\infty$. By \eqref{f5.15}, and the assumptions $\Re(s_1), \Re(s_2)\geq 10$, we obtain
\begin{equation}\label{eq5.19}
J_{\RNum{2},v}(\beta)\ll 1,
\end{equation}
where the implied constant depends on $s_1$, $s_2$, $F$,  $h_v$'s, $\pi_{v_0}$, $\chi$ and $\phi$.
\end{itemize}

Therefore, Lemma \ref{lem5.1} follows from \eqref{f5.17}, \eqref{eq5.17}, \eqref{eq5.18}, and \eqref{eq5.19}.
\end{proof}

\begin{remark}
In the above proof, we have leveraged the fact that the cusp form $\phi$ is bounded.
\end{remark}

\begin{cor}\label{cor5.3}
Let notation be as before. Let $\phi$ be a cusp form in $\sigma$. Let $\Re(s_1)\geq 10$ and $\Re(s_2)\geq \Re(s_1)+10$. Then $J_{\Reg}^{\RNum{2}}(f,\phi,\boldsymbol{s})\equiv 0$. 	
\end{cor}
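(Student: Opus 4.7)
The plan is to mimic the proof of Corollary \ref{cor4.3} while importing the local analysis of Section 5.1 in place of that of Section 4.1. Starting from the definition of $J_{\Reg}^{\RNum{2}}(f,\phi,\boldsymbol{s})$ in \S\ref{sec2.5}, I would unfold the Bruhat cell $N_{w_1w_2}(F)w_1w_2P_0(F)$ to write the integrand as a sum over $\alpha,\gamma,\delta,\beta \in F$ of $f(Y)$ with $Y$ as in \eqref{eq5.15}. The parameters $\alpha$ and $\gamma$ can be eliminated by the algebraic identities used in the proof of Lemma \ref{lem5.1}: $\alpha$ is killed by translating the upper-left $2\times 2$-block of $y$ by $\alpha\beta^{-1}$, while $\gamma$ is absorbed into the change of variable $a\mapsto\gamma a$ together with the identity $\begin{pmatrix}1&\gamma\\&1\end{pmatrix}=\begin{pmatrix}\gamma\\&1\end{pmatrix}\begin{pmatrix}1&1\\&1\end{pmatrix}\begin{pmatrix}\gamma^{-1}\\&1\end{pmatrix}$ (using that $\chi$ is a character, so the $\chi(a)$ factor merely contributes $\chi(\gamma)$). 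The variable $\delta$ is absorbed into $b$. One is left with a sum only over $\beta\in F^{\times}$ (note $f(Y)\equiv 0$ unless $\beta\neq 0$, by the $v_0$-component in \S\ref{sec2.2} together with the local analysis \S\ref{sec5.1.3}) of the same type of integrals analyzed in the proof of Lemma \ref{lem5.1}.

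By Lemma \ref{lem5.1}, for $\Re(s_1)\geq 10$ and $\Re(s_2)\geq \Re(s_1)+10$ the triple integral is absolutely convergent, so one can legitimately interchange the $\beta$-sum with the integrations and examine the vanishing of each summand. From the discussion in \S\ref{sec5.1.4}, the support condition \eqref{eq5.1} at all finite places forces $\beta$ to lie in a fixed fractional ideal $\mathfrak{N}\mathfrak{J}$ depending only on $F$, $\pi_{v_0}$, $\chi$, $\sigma$ (this uses \eqref{eq5.7} at $v\mid\mathfrak{N}$, \eqref{eq5.4} at ramified and $v_0$-places, and \eqref{eq5.8}--\eqref{eq5.14} at the remaining finite places). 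Simultaneously, the Archimedean conditions \eqref{f5.15} impose $|\beta|_v\ll 1$ at every $v\mid\infty$, with implied constants depending only on the fixed data $h_v$.

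The contradiction is then immediate from the product formula. Any nonzero $\beta\in\mathfrak{N}\mathfrak{J}$ satisfies $\prod_{v\in\Sigma_F}|\beta|_v=1$ and $\prod_{v<\infty}|\beta|_v\leq N_F(\mathfrak{N}\mathfrak{J})^{-1}$, hence $\prod_{v\mid\infty}|\beta|_v\geq N_F(\mathfrak{N})\cdot N_F(\mathfrak{J})$, whereas the Archimedean bound gives $\prod_{v\mid\infty}|\beta|_v\ll 1$ with an absolute constant. For $N_F(\mathfrak{N})$ sufficiently large these two estimates are incompatible, so no $\beta\in F^{\times}$ contributes and the sum defining $J_{\Reg}^{\RNum{2}}(f,\phi,\boldsymbol{s})$ is empty. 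Therefore $J_{\Reg}^{\RNum{2}}(f,\phi,\boldsymbol{s})\equiv 0$ identically in the indicated region, and the corollary follows.

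The main conceptual obstacle, already overcome in Section 5.1 and Lemma \ref{lem5.1}, is the local analysis at the non-Archimedean places: the Type \RNum{2} Bruhat cell is bigger and produces more cases than the Type \RNum{1} one, and the convergence of the regular orbital integral is only delicately controlled because the support of $y_{1,v}$ and $a_v$ is not compact (see \eqref{f5.5} and \eqref{eq5.11}--\eqref{eq5.14}). Once convergence is in hand, the vanishing step itself is essentially a one-line pigeonhole argument: the non-Archimedean divisibility by $\mathfrak{N}$ and the Archimedean boundedness cannot coexist for large $N_F(\mathfrak{N})$.
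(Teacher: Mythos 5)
Your proposal is correct and follows essentially the same route as the paper's proof: absolute convergence via Lemma \ref{lem5.1}, elimination of $\alpha,\gamma,\delta$ by the algebraic identities, localization of $\beta$ to $\mathfrak{N}\mathfrak{J}-\{0\}$ with $|\beta|_v\ll 1$ at $v\mid\infty$ via \S\ref{sec5.1.3}--\S\ref{sec5.1.4}, and then the observation that no such $\beta$ exists once $N_F(\mathfrak{N})$ is large. The only cosmetic difference is that you spell out the final incompatibility explicitly as a product-formula argument, where the paper simply asserts the nonexistence of qualifying $\beta$.
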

\begin{proof}
By definition,
\begin{multline*}
	J_{\Reg}^{\RNum{2}}(f,\phi,\boldsymbol{s})=\int_{F^{\times}\backslash\mathbb{A}_F^{\times}}\int_{F\backslash\mathbb{A}_F}\int_{\mathbb{A}_F}\int_{\mathrm{GL}_2(\mathbb{A}_F)}\sum_{\alpha,\beta,\gamma \in F}f\left(Y\right)\\
\chi(a)\psi(c)|a|^{s_1-\frac{1}{2}}\phi(y)|\det y|^{s_2}d y dbdcd^{\times}a,
\end{multline*}
where $Y$ refers to the matrix defined as in \eqref{eq5.15}.  Therefore, it follows from Lemma \ref{lem5.1} and the triangle inequality that $J_{\Reg}^{\RNum{2}}(f,\phi,\boldsymbol{s})$ converges absolutely in the region $\Re(s_1)\geq 10$ and $\Re(s_2)\geq \Re(s_1)+10$. Furthermore,
\begin{multline*}
	J_{\Reg}^{\RNum{2}}(f,\phi,\boldsymbol{s})=\sum_{\substack{\beta\in \mathfrak{N}\mathfrak{J}-\{0\}\\ |\beta|_v\ll 1\ v\mid\infty}}\int_{(\mathbb{A}_F^{\times})^3}\int_{(\mathbb{A}_F)^3}\int_{K}f\left(Y^*\right)\\
\chi(a)\psi(c)|a|^{s_1-\frac{1}{2}}\phi(y)|\det y|^{s_2}d y dbdcd^{\times}a,
\end{multline*}
where $Y^*$ is defined by
\begin{align*}
\begin{pmatrix}
a^{-1}y_1&a^{-1}(y_1u+y_2b) & a^{-1}(\beta^{-1}+b)\\
y_1 & y_1u+y_2c & c\\
& y_2& 1
\end{pmatrix}\begin{pmatrix}
k\\
& 1	
\end{pmatrix}.
\end{align*}

However, when $N_F(\mathfrak{N})$ is large enough, there is no $\beta$ satisfying $\beta\in \mathfrak{N}\mathfrak{J}-\{0\}$ and $|\beta|_v\ll 1\ v\mid\infty$. Hence, $J_{\Reg}^{\RNum{2}}(f,\phi,\boldsymbol{s})\equiv 0$.
\end{proof}

\section{Conclusion of the Proof}
Let $\Re(s_1)\geq 10$ and $\Re(s_2)\geq \Re(s_1)+10$. Let $I(\mathfrak{N})$ be defined as in \eqref{equ2.1}. By the relative trace formula from \eqref{f2.9} and \eqref{eq2.7}, together with Corollaries \ref{cor4.3} and \ref{cor5.3}, we obtain
\begin{align*}
\sum_{\pi}\sum_{\varphi\in \mathfrak{B}_{\pi}^{I(\mathfrak{N})}}\mathcal{P}_1(1/2+s_1,\pi(f)\varphi,\chi)\mathcal{P}_2(1/2+s_2,\overline{\varphi},\phi)=J_{\sm}(f,\phi,\boldsymbol{s}),
\end{align*}
as an identity of holomorphic functions in the region $\Re(s_1)\geq 10$ and $\Re(s_2)\geq \Re(s_1)+10$.

By Proposition \ref{prop3.1} the function $J_{\sm}(f,\phi,\boldsymbol{s})$ admits an analytic continuation to $\mathbb{C}^2$. Hence, by the uniqueness of the continuation, we obtain
\begin{equation}\label{eq6.1}
\sum_{\pi}\sum_{\varphi\in \mathfrak{B}_{\pi}^{I(\mathfrak{N})}}\mathcal{P}_1(1/2+s_1,\pi(f)\varphi,\chi)\mathcal{P}_2(1/2+s_2,\overline{\varphi},\phi)=J_{\sm}(f,\phi,\boldsymbol{s})
\end{equation}
as an identity of entire functions on $\mathbb{C}^2$. Evaluating \eqref{eq6.1} at $\mathbf{s}=(0,0)$, along with the estimate \eqref{equ3.1} in Proposition \ref{prop3.1}, we deduce
\begin{equation}\label{6.2}
\sum_{\pi}\sum_{\varphi\in \mathfrak{B}_{\pi}^{I(\mathfrak{N})}}\mathcal{P}_1(1/2,\pi(f)\varphi,\chi)\mathcal{P}_2(1/2,\overline{\varphi},\phi)\gg N_F(\mathfrak{N})^{3},
\end{equation}
when $N_F(\mathfrak{N})$ is sufficiently large. It follows from \eqref{6.2} that similarly to the calculation in \cite[\textsection 13]{MRY23}, we have
\begin{equation}\label{6.3}
\sum_{\pi}\sum_{\varphi\in \mathfrak{B}_{\pi}^{K_{\fin}}}\mathcal{P}_1(1/2,\pi(f)\varphi,\chi)\mathcal{P}_2(1/2,\overline{\varphi},\phi)\ll 1,
\end{equation}
where $K_{\fin}:=\prod_{v<\infty}K_v$. Therefore, it follows from \eqref{6.2} and \eqref{6.3} that
\begin{align*}
\sum_{\pi}\sum_{\varphi\in \mathfrak{B}_{\pi}^{I(\mathfrak{N})}-\mathfrak{B}_{\pi}^{K_{\fin}}}\mathcal{P}_1(1/2,\pi(f)\varphi,\chi)\mathcal{P}_2(1/2,\overline{\varphi},\phi)\gg N_F(\mathfrak{N})^{3},
\end{align*}
from which we derive that there is a unitary cuspidal automorphic representation $\pi$ of $\mathrm{PGL}_3/F$ of level $I(\mathfrak{N})$ such that
\begin{align*}
L(1/2,\pi\times\chi)L(1/2,\widetilde{\pi}\times\sigma)\neq 0.
\end{align*}

Therefore, Theorem \ref{M} follows upon replacing $\sigma$ with its contragredient $\widetilde{\sigma}$ and the functional equation of $L(s,\widetilde{\pi}\times\sigma)$.

\bibliographystyle{alpha}

\bibliography{MRY}

\begin{thebibliography}{MRY23}

\bibitem[JPSS79]{JPSS2}
Herv\'e{} Jacquet, Ilja~Iosifovitch Piatetski-Shapiro, and Joseph Shalika.
\newblock Automorphic forms on {${\rm GL}(3)$}. {I}.
\newblock {\em Ann. of Math. (2)}, 109(1):169--212, 1979.

\bibitem[JPSS83]{JPSS1}
H.~Jacquet, I.~I. Piatetskii-Shapiro, and J.~A. Shalika.
\newblock Rankin-{S}elberg convolutions.
\newblock {\em Amer. J. Math.}, 105(2):367--464, 1983.

\bibitem[MRY23]{MRY23}
Philippe Michel, Dinakar Ramakrishnan, and Liyang Yang.
\newblock {Bessel} periods on {$U (2, 1)\times U(1,1)$}, relative trace formula
  and non-vanishing of central {L}-values.
\newblock {\em arXiv preprint arXiv:2309.08490}, 2023.

\bibitem[RR05]{RR05}
Dinakar Ramakrishnan and Jonathan Rogawski.
\newblock Average values of modular {$L$}-series via the relative trace
  formula.
\newblock {\em Pure Appl. Math. Q.}, 1(4, Special Issue: In memory of Armand
  Borel. Part 3):701--735, 2005.

\bibitem[Yan23]{Y23}
Liyang Yang.
\newblock Relative trace formula, subconvexity and quantitative nonvanishing of
  {R}ankin-{S}elberg {$L$}-functions for {$\rm GL(n+1) \times \rm GL(n)$}.
\newblock {\em arXiv preprint arXiv:2309.07534}, 2023.

\end{thebibliography}

\end{document}